\DeclareSymbolFont{AMSb}{U}{msb}{m}{n}
\definecolor{britishracinggreen}{rgb}{0.0, 0.26, 0.15}
\definecolor{cobalt}{rgb}{0.0, 0.28, 0.67}
    \DeclareSymbolFont{usualmathcal}{OMS}{cmsy}{m}{n}
    \DeclareSymbolFontAlphabet{\mathcal}{usualmathcal}
\numberwithin{equation}{section}
\def\be{\begin{equation}}    
\def\ee{\end{equation}}
\def\bitem{\begin{itemize}}
\def\eitem{\end{itemize}}
\def\benum{\begin{enumerate}}
\def\eenum{\end{enumerate}}
\def\into{\hookrightarrow}
\def\onto{\twoheadrightarrow}
\def\Var{\mathrm{Var}}
\def\L{\mathbb L}
\def\A{\mathbb A}
\def\C{\mathbb C}
\def\P{\mathbb P}
\def\L{\mathbb{L}}
\def\Z{\mathbb Z}
\def\O{\mathscr O}
\DeclareMathOperator{\Boxtimes}{\mathlarger{\mathlarger{\boxtimes}}}
\DeclareMathOperator{\Hilb}{Hilb}
\DeclareMathOperator{\Quot}{Quot}
\DeclareMathOperator{\Chow}{Chow}
\DeclareMathOperator{\Sym}{Sym}
\DeclareMathOperator{\rel}{rel}
\DeclareMathOperator{\id}{id}
\DeclareMathOperator{\Supp}{Supp\,}
\DeclareMathOperator{\Exp}{Exp}
\theoremstyle{definition}
\newtheorem*{lemma*}{Lemma}
\newtheorem*{theorem*}{Theorem}
\newtheorem*{example*}{Example}
\newtheorem*{fact*}{Fact}
\newtheorem*{notation*}{Notation}
\newtheorem*{definition*}{Definition}
\newtheorem*{prop*}{Proposition}
\newtheorem*{remark*}{Remark}
\newtheorem*{corollary*}{Corollary}
\newtheorem*{conventions*}{Conventions}
\newtheorem{definition}{Definition}[section]
\newtheorem{example}[definition]{Example}
\newtheorem{question}[definition]{Question}
\newtheorem{notation}[definition]{Notation}
\newtheorem{remark}[definition]{Remark}
\newtheoremstyle{thm} 
        {3mm}
        {3mm}
        {\slshape}
        {0mm}
        {\bfseries}
        {.}
        {1mm}
        {}
\theoremstyle{thm}
\newtheorem{theorem}[definition]{Theorem}
\newtheorem{corollary}[definition]{Corollary}
\newtheorem{lemma}[definition]{Lemma}
\newtheorem{prop}[definition]{Proposition}
\newtheorem{thm}{Theorem}
\newtheoremstyle{ass} 
        {3mm}
        {3mm}
        {\rmfamily}
        {0mm}
        {\scshape}
        {.}
        {1mm}
        {}
\theoremstyle{ass}
\tikzset{commutative diagrams/arrow style=math font}
\tikzset{commutative diagrams/.cd,
mysymbol/.style={start anchor=center,end anchor=center,draw=none}}
\newcommand\CommDiag[2][\circlearrowleft]{%
  \arrow[mysymbol]{#2}[description]{#1}}
\newcommand\MySymb[2][\square]{%
  \arrow[mysymbol]{#2}[description]{#1}}
\tikzset{
shift up/.style={
to path={([yshift=#1]\tikztostart.east) -- ([yshift=#1]\tikztotarget.west) \tikztonodes}
}
}
\DeclareMathAlphabet{\mathpzc}{OT1}{pzc}{m}{it}
\title[On the motive of the Quot scheme]{On the motive of the Quot scheme of finite quotients \\ of a locally free sheaf}
\author[A. T. Ricolfi]{Andrea T. Ricolfi}
\address{SISSA, Via Bonomea 265 Trieste}
\email[Andrea T. Ricolfi]{aricolfi@sissa.it}
\keywords{Quot schemes, Moduli spaces of sheaves, Grothendieck ring of varieties.}
\begin{document}
\maketitle

\begin{abstract}
Let $X$ be a smooth variety, $E$ a locally free sheaf on $X$. We express the generating function of the motives $[\Quot_X(E,n)]$ in terms of the power structure on the Grothendieck ring of varieties. This extends a recent result of Bagnarol, Fantechi and Perroni for curves, and a result of Gusein-Zade, Luengo and Melle-Hern\'{a}ndez for Hilbert schemes. 
We compute this generating function for curves and we express the relative motive $[\Quot_{\A^d}(\O^{\oplus r}) \to \Sym \A^d]$ as a plethystic exponential.
\end{abstract}

{\hypersetup{linkcolor=black}
\tableofcontents}

\section{Introduction}

Let $X$ be a smooth quasi-projective variety over $\C$, and let $E$ be a locally free sheaf of rank $r$ on $X$. The Quot scheme $\Quot_X(E,n)$ parameterises quotients $E\onto Q$ such that $Q$ is a zero-dimensional sheaf of length $n$. Recently Bagnarol, Fantechi and Perroni \cite{BFP19} have shown that if $C$ is a smooth proper curve, the class
\[
\bigl[\Quot_C(E,n) \bigr] \,\in\, K_0(\Var_{\C})
\]
in the Grothendieck ring of varieties does not depend on $E$. We use the theory of \emph{power structures} \cite{GLMps} to extend their result to arbitrary dimension. Roughly speaking, a power structure on a ring $R$ is a way of making sense of expressions $A(t)^m$, where $A(t) = 1+A_1t+A_2t^2+\cdots$ is a power series with coefficients in $R$ and $m \in R$.

For $(X,E)$ as above, we form the generating function
\[
\mathsf Z_E(t) = \sum_{n \geq 0}\,\bigl[\Quot_X(E,n) \bigr]t^n,
\]
and we denote by $\mathsf P_{r,n}\in K_0(\Var_{\C})$ the motive of the \emph{punctual Quot scheme}, namely the closed subscheme $P_{r,n} \subset \Quot_X(E,n)$ parameterising quotients that are entirely supported at a single (fixed) point in $X$.

Our first main result (proved in Theorem \ref{prop:Power_Structure_Formula}) is the following.

\begin{thm}\label{thmA}
There is an identity
\[
\mathsf Z_E(t) = \left( \sum_{n\geq 0}\mathsf P_{r,n}t^n\right)^{[X]}.
\]
\end{thm}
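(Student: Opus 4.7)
The plan is to stratify $\Quot_X(E,n)$ by the support type of the quotient sheaf and identify each stratum's class with the corresponding term in the explicit combinatorial formula of \cite{GLMps} for the power structure on $K_0(\Var_\C)$. Consider the support morphism
\[
\sigma \colon \Quot_X(E,n) \longrightarrow \Sym^n X, \qquad (E\onto Q)\,\longmapsto\, \sum_{p\in X} \ell(Q_p)\cdot p.
\]
For each tuple $\mathbf k = (k_1,k_2,\ldots)$ of nonnegative integers (almost all zero) with $\sum_i i\cdot k_i = n$, let $(\Sym^n X)_{\mathbf k}\subset \Sym^n X$ denote the locally closed stratum of $0$-cycles consisting of exactly $k_i$ distinct points of multiplicity $i$ for each $i$, and set $\Quot_X(E,n)_{\mathbf k} \defeq \sigma^{-1}((\Sym^n X)_{\mathbf k})$. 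These are finitely many locally closed strata exhausting both sides.

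Next, I would show that each restricted map $\sigma_{\mathbf k} \colon \Quot_X(E,n)_{\mathbf k}\to (\Sym^n X)_{\mathbf k}$ is Zariski locally trivial with fibre $\prod_i P_{r,i}^{k_i}$. Two ingredients enter: first, any zero-dimensional quotient splits canonically as $Q = \bigoplus_p Q_p$ according to its support, so a closed point of $\Quot_X(E,n)_{\mathbf k}$ is the same data as a configuration in $(\Sym^n X)_{\mathbf k}$ together with a length-$i$ quotient of $E$ concentrated at each of the $k_i$ corresponding points; second, since $E$ is locally free of rank $r$, on a Zariski open $U\subset X$ around any point $p$ a trivialization $E|_U\isom \O_U^{\oplus r}$ identifies the punctual Quot scheme at $p$ with $P_{r,i}$. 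Covering $(\Sym^n X)_{\mathbf k}$ by opens whose configurations live in pairwise disjoint trivializing neighbourhoods in $X$ then yields the desired local triviality.

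Finally, the quotient map $\pi\colon X^{\sum_i k_i}\setminus \Delta\to (\Sym^n X)_{\mathbf k}$, where $\Delta$ is the big diagonal, is a Galois étale cover with group $\prod_i S_{k_i}$ acting by permuting within each block of $k_i$ coordinates. Combined with the previous step this yields the identity
\[
\bigl[\Quot_X(E,n)_{\mathbf k}\bigr] \,=\, \left[\Bigl(\bigl(X^{\sum_i k_i}\setminus \Delta\bigr)\times \prod_i P_{r,i}^{k_i}\Bigr)\Big/\prod_i S_{k_i}\right]
\]
in $K_0(\Var_\C)$, and summing over $\mathbf k$ and $n$ reproduces verbatim the \cite{GLMps} formula for $\left(\sum_{n\geq 0}\mathsf P_{r,n}t^n\right)^{[X]}$, proving Theorem \ref{thmA}. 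The main obstacle lies in the middle step: one must promote the pointwise splitting of the quotient and the pointwise trivializations of $E$ into a genuine morphism of schemes realizing each stratum as a locally trivial fibration, rather than merely a bijection on geometric points.
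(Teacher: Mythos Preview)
Your overall strategy---stratify by support type and match each stratum with the corresponding term in the Gusein-Zade--Luengo--Melle-Hern\'andez formula---is exactly the paper's. The gap is not where you flag it but one step earlier: the assertion that $\sigma_{\mathbf k}\colon \Quot_X(E,n)_{\mathbf k}\to (\Sym^n X)_{\mathbf k}$ is Zariski locally trivial with fibre $\prod_i P_{r,i}^{k_i}$ is false in general, not merely hard to verify. Your argument for it asks for pairwise disjoint Zariski neighbourhoods of distinct points of $X$, which do not exist when $X$ is irreducible. More intrinsically, the Galois cover $\prod_i X^{k_i}\setminus\Delta \to (\Sym^n X)_{\mathbf k}$ has monodromy $\prod_i\mathfrak S_{k_i}$ permuting the $k_i$ copies of $P_{r,i}$ inside the fibre, and whenever some $k_i\geq 2$ this obstructs Zariski trivialisation. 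Note also that if your triviality claim held it would give $[\Quot_X(E,n)_{\mathbf k}]=[(\Sym^n X)_{\mathbf k}]\cdot\prod_i\mathsf P_{r,i}^{k_i}$, which is \emph{not} the quotient class you then write down (nor the term in the power-structure formula); the two disagree precisely because of that permutation action.

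The paper avoids the issue by never asserting local triviality over $(\Sym^n X)_{\mathbf k}$. It proves Zariski local triviality only for the deepest stratum $\sigma_{(n)}\colon\Quot_X(E,n)_{(n)}\to X$, where there is no monodromy. For a general partition $\alpha$ it then pulls back along the $G_\alpha$-cover to a scheme $Z_\alpha$, shows that $Z_\alpha\to\prod_i X^{\alpha_i}\setminus\Delta$ is a $G_\alpha$-\emph{equivariant} piecewise trivial fibration with fibre $\prod_i P_{r,i}^{\alpha_i}$ (this is where the ``union of points'' \'etale map of \cite{BR18} and the deepest-stratum triviality are used), obtains $[Z_\alpha]=\bigl[\prod_i X^{\alpha_i}\setminus\Delta\bigr]\cdot\prod_i\mathsf P_{r,i}^{\otimes\alpha_i}$ in the equivariant ring $K_0^{G_\alpha}(\Var_\C)$, and only then applies the quotient map $\pi_{G_\alpha}$ to recover your displayed identity. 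The equivariant bookkeeping upstairs is the essential content; it cannot be replaced by a non-equivariant local triviality statement downstairs.
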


Since the punctual Quot scheme only depends on $r$, $n$ and $\dim X$, it follows that the same holds true for the motive of $\Quot_X(E,n)$.
Note that this was proved for $r=1$ (the Hilbert scheme case) by Gusein-Zade, Luengo and Melle-Hern\'{a}ndez \cite{GLMHilb}.

\smallbreak
Our second main result is of \emph{relative} nature and concerns $X = \A^d$. The Quot-to-Chow morphism
\[
\Quot_X(E,n) \to \Sym^nX
\]
sends a quotient $E\onto Q$ to the support of $Q$, viewed as a zero-cycle with multiplicities. We consider the relative motive
\[
\mathsf Z^{\rel}(\A^d,r) = \sum_{n\geq 0}\,\bigl[\Quot_{\A^d}(\O^{\oplus r},n) \to \Sym^n \A^d \bigr]\,\in \,K_0(\Var_{\Sym \A^d})
\]
over the symmetric product of $\A^d$. We define classes $\Omega_{r,n} \in K_0(\Var_{\C})$ by \[
\sum_{n\geq 0}\mathsf P_{r,n}t^n = \Exp\left(\sum_{n > 0}\Omega_{r,n}t^n\right)
\]
where $\Exp$ is the \emph{motivic exponential} (see Section \ref{sec:Motivic_Exp}) induced by the lambda ring structure on $K_0(\Var_{\C})$. For $\A^d$, we refine Theorem \ref{thmA}  by showing (see Theorem \ref{thm:Relative_Exp}) that $\mathsf Z^{\rel}(\A^d,r)$ is generated on the small diagonal by the absolute motives $\Omega_{r,n}$.

\begin{thm}\label{thmB}
There is an identity
\[
\mathsf Z^{\rel}(\A^d,r) = \Exp_{\cup}\left(\sum_{n>0} \Omega_{r,n} \boxtimes \bigl[\A^d\xrightarrow{\Delta_n}\Sym^n\A^d\bigr] \right).
\]
\end{thm}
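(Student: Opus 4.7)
The plan is to refine Theorem~\ref{thmA} in the relative Grothendieck ring $K_0(\Var_{\Sym\A^d})$ by stratifying the Quot-to-Chow morphism $\Quot_{\A^d}(\O^{\oplus r},n)\to\Sym^n\A^d$ according to the partition type of the target zero-cycle. The key geometric input is that a quotient $\O^{\oplus r}\onto Q$ with support $\sum_i m_i p_i$ (with $p_i$'s pairwise distinct) decomposes uniquely as $\bigoplus_i Q_i$ with each $Q_i$ of length $m_i$ at $p_i$. Consequently, writing $\Sym^n_\nu\A^d\subset\Sym^n\A^d$ for the locally closed stratum of partition type $\nu=(1^{a_1}2^{a_2}\cdots)\vdash n$, the restriction of $\Quot_{\A^d}(\O^{\oplus r},n)$ over it is built, up to the natural $\prod_n S_{a_n}$-symmetry, from the punctual Quot schemes $P_{r,n}$ fibered over the corresponding labeled configuration space in $\A^d$.

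Next I would unpack the right-hand side. By multiplicativity of the plethystic exponential $\Exp_\cup$ with respect to the $\cup$-product on $K_0(\Var_{\Sym\A^d})$ induced by addition of zero-cycles,
\[
\Exp_\cup\left(\sum_{n>0}\Omega_{r,n}\boxtimes\bigl[\A^d\xrightarrow{\Delta_n}\Sym^n\A^d\bigr]\right) \;=\; \prod_{n>0}\Exp_\cup\bigl(\Omega_{r,n}\boxtimes[\A^d\xrightarrow{\Delta_n}\Sym^n\A^d]\bigr),
\]
and each single-generator factor unfolds as $\sum_{a\geq 0}[\Sym^a(\Omega_{r,n}\times\A^d)\to\Sym^{an}\A^d]$, with structure map factoring through $\Sym^a\A^d\xrightarrow{\Sym^a\Delta_n}\Sym^{an}\A^d$. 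Collecting terms of fixed total degree, the right-hand side becomes a sum indexed by tuples $(a_n)_{n>0}$, each indexed term being supported precisely over the stratum $\Sym^n_\nu\A^d$ with $\nu=(1^{a_1}2^{a_2}\cdots)$, because the image of the $\Sym^a\Delta_n$-maps is exactly the locus of zero-cycles clustering with these multiplicities.

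Matching the two sides stratum by stratum then reduces the problem to comparing, for each fixed $\nu$, the relative class $[\Quot_{\A^d}(\O^{\oplus r},n)|_{\Sym^n_\nu\A^d}\to\Sym^n_\nu\A^d]$ with the corresponding $\Exp_\cup$-term in the $\Omega_{r,n}$'s. This in turn follows from the defining identity $\sum_{n\geq 0}\mathsf P_{r,n}t^n=\Exp(\sum_{n>0}\Omega_{r,n}t^n)$ once one invokes it fiberwise over each configuration space and uses the compatibility of the motivic lambda operations on $K_0(\Var_\C)$ with pushforward along $\A^d\xrightarrow{\Delta_n}\Sym^n\A^d$.

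The main obstacle will be the bookkeeping required to ensure that the $\prod_n S_{a_n}$-symmetries produced by the motivic $\Sym$-operations on the $\Omega_{r,n}$-side are matched exactly by those acting on the configuration spaces on the Quot-side, without spurious combinatorial corrections. In particular, the gluing of punctual contributions along small diagonals on the geometric side has to mirror the way $\Exp_\cup$ distributes powers of small-diagonal classes across $\Sym$-strata; verifying this compatibility cleanly in $K_0(\Var_{\Sym\A^d})$ is where the heart of the argument lies.
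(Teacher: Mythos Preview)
Your overall strategy---stratify by partition type and compare both sides---matches the paper's, but there is a genuine gap in how you handle the right-hand side. The claim that the $(a_n)$-indexed term in the expansion of $\Exp_\cup$ is ``supported precisely over the stratum $\Sym^n_\nu\A^d$'' is false. The structure map $\Sym^{a_1}(\A^d)\times\Sym^{a_2}(\A^d)\times\cdots\to\Sym^n\A^d$ (via small diagonals followed by $\cup$) does not land in the open stratum: points in the various $\Sym^{a_i}(\A^d)$ factors are allowed to collide, so the image meets the closures of several deeper strata. Consequently your stratum-by-stratum matching breaks down; many $(a_n)$-tuples contribute to a given open stratum, and an inclusion--exclusion is required. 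This is precisely the combinatorics encoded in the passage from $\Omega_{r,n}$ to $\mathsf P_{r,n}$ via $\Exp$, but your phrase ``invokes it fiberwise over each configuration space'' is not yet a proof that the relative classes agree in $K_0(\Var_{\Sym\A^d})$.

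The paper avoids this pitfall by reversing the logic. It never expands $\Exp_\cup$ in terms of the $\Omega_{r,n}$ directly. Instead it computes the \emph{left}-hand side stratum by stratum in terms of the punctual motives $\mathsf P_{r,i}$---these are what genuinely live over the open strata once one restricts to complements of big diagonals---obtaining for each partition $\alpha\vdash n$ an explicit class
\[
\bigl[Q^n_\alpha\to\Sym^n\A^d\bigr]
=\cup_!\,\pi_{G_\alpha}\,j_\alpha^\ast\Biggl(\underset{i\mid\alpha_i\neq 0}{\Boxtimes}\Delta_{i\,!}\bigl(\bigl[\A^d\xrightarrow{\id}\A^d\bigr]\boxtimes\mathsf P_{r,i}\bigr)^{\otimes\alpha_i}\Biggr).
\]
It then cites Lemma~\ref{Preparation_Exp}, which says exactly that a sum of this shape, with coefficients $B_i$ satisfying $\sum_iB_it^i=\Exp\bigl(\sum_iA_it^i\bigr)$, equals $\Exp_\cup\bigl(\sum_n A_n\boxtimes[\A^d\xrightarrow{\Delta_n}\Sym^n\A^d]\bigr)$. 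In other words, the delicate bookkeeping you flag in your last paragraph is packaged into that lemma (itself imported from \cite{DavisonR}) and used as a black box; your outline would need to reprove an equivalent statement, and as written it does not.
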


See \cite{DavisonR,Quot19} for analogues of this result in the context of motivic Donaldson--Thomas theory and \cite{BBS} for the calculation of the (absolute) \emph{virtual motive} of $\Hilb^n(\A^3)$.

\smallbreak
Finally, our last result (see Section \ref{sec:quot_curve_motivic}) is the full ``solution'' of the motivic theory of the Quot scheme of a smooth curve, which can be summarised by the identities
\[
\Omega_{r,n} = 
\begin{cases}
    [\P^{r-1}] & \textrm{if }n=1 \\
    0& \textrm{if }n>1.
\end{cases}
\]
\begin{thm}\label{thm_C}
If $E$ is a locally free sheaf on a smooth curve $C$, there is an identity
\[
\mathsf Z_E(t) = \Exp\left(\bigl[C\times \P^{r-1}\bigr]t\right).
\]
Moreover, in $K_0(\Var_{\Sym \A^1})$ there is an identity
\[
\mathsf Z^{\rel}(\A^1,r) = \Exp_\cup\left(\bigl[\P^{r-1}\bigr] \boxtimes \bigl[ \A^1 \xrightarrow{\id} \A^1\bigr] \right).
\]
\end{thm}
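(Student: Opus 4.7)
The strategy is to compute the generating series $\sum_n\mathsf P_{r,n}t^n$ of punctual motives on a curve, and then feed the result into Theorems~\ref{thmA} and~\ref{thmB}. Since $\mathsf P_{r,n}$ depends only on $r$, $n$ and $\dim X$, we may replace $(C,p)$ étale-locally by $(\A^1,0)$. The core claim is
\[
\sum_{n\geq 0}\mathsf P_{r,n}\,t^n \;=\; \Exp\bigl([\P^{r-1}]\,t\bigr),
\]
equivalently $\Omega_{r,1}=[\P^{r-1}]$ and $\Omega_{r,n}=0$ for $n\geq 2$. Expanding $[\P^{r-1}]=1+\L+\cdots+\L^{r-1}$ and using $\Exp(\L^{i}t)=(1-\L^{i}t)^{-1}$ together with the additivity $\Exp(a+b)=\Exp(a)\Exp(b)$, the right-hand side becomes $\prod_{i=0}^{r-1}(1-\L^{i}t)^{-1}$; by the motivic $q$-binomial theorem, the coefficient of $t^n$ there is the Gaussian binomial $\bigl[\Gr(r-1,n+r-1)\bigr]$. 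Hence the problem reduces to the identification $\mathsf P_{r,n}=\bigl[\Gr(r-1,n+r-1)\bigr]$.

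To establish this, I would stratify $P_{r,n}$ by the isomorphism type $Q_\lambda\cong\bigoplus_{i}\C[x]/(x^{\lambda_i})$ of the length-$n$ quotient, where $\lambda\vdash n$ has $\ell(\lambda)\leq r$. Each $\lambda$-stratum is the free geometric quotient of the variety of surjections $\C[[x]]^{\oplus r}\onto Q_\lambda$ by $\Aut(Q_\lambda)$; both factors admit closed-form motives in $\L$ governed by the conjugate partition of $\lambda$. Summing these contributions over $\lambda$ and matching the result with the Gaussian binomial coefficient is a standard Hall-algebra $q$-identity. A more conceptual alternative is to present $P_{r,n}$ as an iterated Grassmann bundle using the filtration by powers of the local parameter, which displays the Grassmannian class directly. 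The main obstacle lies precisely in this motive computation: the partition-type stratification is easy to set up but the attendant $q$-identity is the nontrivial combinatorial core, while the Grassmann-bundle presentation bypasses the combinatorics at the cost of a careful recursive construction.

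With $\sum_{n}\mathsf P_{r,n}t^n=\Exp([\P^{r-1}]t)$ in hand, the first assertion of Theorem~\ref{thm_C} follows from Theorem~\ref{thmA} together with the standard compatibility $\Exp(\alpha\,t)^{\gamma}=\Exp(\alpha\gamma\,t)$ of the power structure on $K_0(\Var_{\C})$ with the motivic exponential:
\[
\mathsf Z_E(t) \;=\; \bigl(\Exp([\P^{r-1}]\,t)\bigr)^{[C]} \;=\; \Exp\bigl([C\times\P^{r-1}]\,t\bigr).
\]
The relative identity is then immediate from Theorem~\ref{thmB} specialised to $d=1$: since $\Omega_{r,n}=0$ for $n\geq 2$, only the $n=1$ summand of the exponent survives, and $\Delta_1=\id_{\A^1}$, yielding precisely the claimed formula.
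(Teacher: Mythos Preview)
Your plan is sound but takes a genuinely different route from the paper. The paper does \emph{not} compute the punctual series $\mathsf P_r(t)$ directly. Instead it imports the formula of Bagnarol--Fantechi--Perroni for $[\Quot_C(E,n)]$ on a \emph{projective} curve, rewrites the resulting generating function as $\prod_{i=1}^r\zeta_C(\L^{i-1}t)=\Exp([C\times\P^{r-1}]t)$, and then compares this with the general identity $\mathsf Z_E(t)=\Exp\bigl([C]\sum_{n>0}\Omega_{r,n}t^n\bigr)$ coming from Theorem~\ref{thmA}. Injectivity of $\Exp$ gives $[C]\cdot\Omega_{r,n}=0$ for $n>1$; since $\Omega_{r,n}$ is effective one may write $\Omega_{r,n}=[Y]$, and $[C\times Y]=0$ forces $Y=\emptyset$. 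This recovers $\Omega_{r,n}=0$ for $n>1$ (and hence $\mathsf P_r(t)=\Exp([\P^{r-1}]t)$) \emph{a posteriori}, after which the first formula extends to quasi-projective $C$ and the relative statement follows from Theorem~\ref{thmB} exactly as you say.

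Your approach is bottom-up rather than top-down: you propose to compute $\mathsf P_{r,n}$ directly over $\C[[x]]$, either by the Hall-type stratification by the partition type of $Q$ or by an iterated Grassmann-bundle presentation of the lattice variety, and then feed this into Theorems~\ref{thmA} and~\ref{thmB}. This is more self-contained (no reliance on \cite{BFP19}) and gives a direct description of $P_{r,n}$, which the paper in fact flags as an open problem in its final section. The cost is that you must actually execute the $q$-identity or the recursive tower argument you identify as the nontrivial core; the paper avoids this work entirely by leaning on the external global result plus the effectiveness trick to cancel $[C]$. One small remark: your step $\Exp(\alpha t)^{\gamma}=\Exp(\alpha\gamma t)$ is correct via axiom~(5) of Definition~\ref{def:power_structure}, but in the paper's framework it is packaged as the reformulation~\eqref{eqn:Relative_Motive_Exp}.
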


We use the first relation to compute the Hodge--Deligne polynomial of the smooth space $\Quot_C(E,n)$ for a proper curve $C$ (Proposition \ref{prop:Hodge}). We stress that the formula for $\mathsf Z_E$ in the proper case was already implicit in the calculation of \cite[Prop.~4.5]{BFP19}. 

In Section \ref{sec:sufaces} we discuss the case $r=1$ on a surface, where we find $\Omega_{1,n} = \L^{n-1}$ according to G\"{o}ttsche's formula \cite{LG1}. Finally, we conclude by proposing a geometric open problem related to punctual Quot schemes on curves.

\smallbreak
We work over the field of complex numbers throughout.

\subsection*{Acknowledgements}
We wish to thank Massimo Bagnarol, Barbara Fantechi and Fabio Perroni for helpful discussions. We thank the anonymous referee for making several suggestions that helped improving the text. Finally, we thank SISSA for the excellent working conditions.

\section{Motivic preliminaries}

In this section we recall a few motivic constructions that will be needed later. Most of this material is a simplified version of \cite[Section 1]{DavisonR}, adapted to suit the purposes of this paper.

\subsection{The Grothendieck ring of varieties}
Fix a complex scheme $S$ locally of finite type over $\C$. The Grothendieck ring of $S$-varieties
\[
K_0(\Var_S)
\]
is the free abelian group generated by isomorphism classes $[X\to S]$ of $S$-varieties modulo the \emph{scissor relations}, namely the identities
\[
\bigl[X\xrightarrow{f} S\bigr] \,=\,\bigl[Y\xrightarrow{f|_{Y}} S\bigr] + \bigl[X\setminus Y\xrightarrow{f|_{X\setminus Y}} S\bigr]
\]
imposed whenever $Y\subset X$ is a closed $S$-subvariety of $X$. The ring structure is given on generators by fibre product over $S$,
\be\label{Product_KVar}
[X\to S]\cdot [Y\to S] = [X\times_SY\to S].
\ee
The element
\[
\L = [\A^1\times_{\C}S \to S]\,\in\,K_0(\Var_S)
\]
is called the \emph{Lefschetz motive} (over $S$).
If $S'$ is another complex scheme, there is an external product
\be\label{External_product}
K_0(\Var_S) \times K_0(\Var_{S'}) \xrightarrow{\boxtimes} K_0(\Var_{S\times S'})
\ee
defined on generators by sending $([f\colon X\to S],[g\colon X'\to S'])\mapsto [f\times g\colon X\times X' \to S\times S']$.

A morphism $f\colon S\to T$ induces a ring homomorphism $f^\ast\colon K_0(\Var_T) \to K_0(\Var_S)$ by base change and a $K_0(\Var_T)$-linear map $f_!\colon K_0(\Var_S) \to K_0(\Var_T)$ defined on generators by composition with $f$.

\begin{definition}
We denote by $S_0(\Var_S)$ the semigroup of \emph{effective} motives, i.e.~the semigroup generated by isomorphism classes $[X\to S]$ of complex quasi-projective $S$-varieties modulo the scissor relations. The product \eqref{Product_KVar} turns $S_0(\Var_S)$ into a semiring. There is a natural semiring map $S_0(\Var_{S}) \to K_0(\Var_S)$, and we say that $\alpha \in K_0(\Var_S)$ is effective if it lies in the image of this map.
\end{definition}

\subsection{Equivariant motives and the quotient map}\label{sec:Quotient_Map}
Recall that if $S$ is a scheme with a \emph{good} action by a finite group $G$ (i.e.~an action such that every point of $S$ has an affine $G$-invariant open neighborhood), the quotient $S/G$ exists as a scheme. For instance, finite group actions on quasi-projective varieties are good.

\begin{definition}\label{Equivariant_K_Group}
Let $G$ be a finite group, $S$ a scheme with good $G$-action. We denote by $\widetilde{K}_0^{G}(\Var_S)$ the free abelian group generated by isomorphism classes $[X\to S]$ of $G$-equivariant $S$-varieties
with good action, modulo the $G$-equivariant scissor relations. We denote by $K_0^G(\Var_S)$ the quotient of $\widetilde{K}_0^{G}(\Var_S)$ by the relations 
\[
[V\to X\to S] = [\A^r_X \to S],
\]
where $V\to X$ is a $G$-equivariant vector bundle of rank $r$ over a $G$-equivariant $S$-variety $X$.
\end{definition}

There is a natural ring structure on $\widetilde{K}_0^{G}(\Var_S)$, where the product of two classes $[X\to S]$ and $[Y\to S]$ is given by taking the diagonal action on $X\times_SY$. The structures $f^*$, $f_!$ and $\boxtimes$ naturally extend to the equivariant setting, along with their basic compatibilities. For instance, if $f\colon S\to T$ (resp.~$g\colon S'\to T'$) is a $G$-equivariant (resp.~$G'$-equivariant) map, and $u$, $v$ are equivariant motives over $S$, $S'$, then
\be\label{PF_Box_Product}
(f\times g)_!(u\boxtimes v) = f_!u \boxtimes g_!v
\ee
in the $(G\times G')$-equivariant $K$-group over $T\times T'$.

One can define a $K_0(\Var_{S/G})$-linear map (cf.~\cite[Lemma 1.5]{DavisonR})
\be\label{map:quot1map}
\pi_G\colon \widetilde{K}_0^{G}(\Var_S) \to K_0(\Var_{S/G})
\ee
given on generators by taking the orbit space,
\[
\pi_G [X\to S] = [X/G \to S/G].
\]
This map does not always extend to $K_0^G(\Var_S)$. It does when $G$ acts freely on $S$, by \cite[Lemma 3.2]{Bittner05}.

\subsection{Lambda ring structures}\label{sec:lambda_rings}

Let $n>0$ be an integer, and let $\mathfrak{S}_n$ be the symmetric group of $n$ elements. By \cite[Lemma~1.6]{DavisonR}, namely the relative version of \cite[Lemma~2.4]{BBS}, there exist ``$n$-th power'' maps 
\be\label{powermap}
(\,\cdot\,)^{\otimes n}\colon K_0(\Var_S)\to  \widetilde{K}_0^{\,\mathfrak S_n}(\Var_{S^n})
\ee
where $S^n = S\times \cdots\times S$ is endowed with the natural $\mathfrak{S}_n$-action. The power map takes $[f\colon X\to S]$ to the class of the equivariant function $f^n\colon X^n\to S^n$.
For $A\in K_0(\Var_S)$, consider the classes 
\[
\pi_{\mathfrak{S}_n}(A^{\otimes n}) \in K_0(\Var_{S^n/\mathfrak S_n}).
\]
The \textit{lambda ring} operations on $K_0(\Var_{\C})$ are defined by 
\[
A\mapsto \sigma^n(A) = \pi_{\mathfrak{S}_n}(A^{\otimes n}) \in K_0(\Var_{\C})
\]
for effective classes $A \in K_0(\Var_{\C})$, and then taking the unique extension to a lambda ring structure on $K_0(\Var_{\C})$, determined by the relation
\begin{equation}
\label{lambda_rel}
\sum_{i=0}^n\sigma^i([X]-[Y])\sigma^{n-i}[Y]=\sigma^n[X].
\end{equation}

If $S$ comes with a commutative associative map $\nu\colon S\times S\to S$, we likewise define 
\[
\sigma_{\nu}^n(A)=\overline{\nu}_!\pi_{\mathfrak{S}_n}(A^{\otimes n}) \in K_0(\Var_S)
\]
on effective classes $A = [X\to S]$, where $\overline{\nu}$ is the map $S^n/\mathfrak{S}_n\to S$. One then uses the analogue of the relation \eqref{lambda_rel} to find a unique set of lambda ring operators $\sigma_{\nu}^n$ restricting to the previous identity on effective motives. 

As a special case, one can consider $(S,\nu)=(\mathbb N,+)$, viewed as a symmetric monoid in the category of schemes. We obtain lambda operations $\sigma^n = \sigma^n_+$ on $K_0(\Var_{\C})\llbracket t\rrbracket$ via the isomorphism
\be\label{Iso_Power_series}
K_0(\Var_{\C})\llbracket t\rrbracket \,\widetilde{\to}\,K_0(\Var_{\mathbb N})
\ee
defined by sending $\sum_{n\geq 0}[Y_n]t^n \mapsto \bigl[\coprod_{n\in \mathbb N}Y_n\to \set{n}\bigr]$.

\subsection{Power structures}\label{sec:Power_structures}

The main references for power structures are \cite{GLMps,GLMHilb}.

\begin{definition}[{\cite{GLMps}}]\label{def:power_structure}
A \emph{power structure} on a (semi)ring $R$ is a map 
\begin{align*}
(1+tR\llbracket t\rrbracket)\times R&\to 1+tR\llbracket t\rrbracket\\
(A(t),m)&\mapsto A(t)^m
\end{align*}
satisfying the following conditions: 
\begin{enumerate}
    \item $A(t)^0=1$,
    \item $A(t)^1=A(t)$, 
    \item $(A(t)\cdot B(t))^m=A(t)^m\cdot B(t)^m$, 
    \item $A(t)^{m+m'}=A(t)^m\cdot A(t)^{m'}$, 
    \item $A(t)^{mm'}=(A(t)^m)^{m'}$, 
    \item $(1+t)^m=1+mt+O(t^2)$, 
    \item $A(t)^m\big{|}_{t\to t^e}=A(t^e)^m$.
\end{enumerate}  
\end{definition}

Throughout we use the following:
\begin{notation}
Partitions $\alpha\vdash n$ are written as $\alpha=(1^{\alpha_1}\cdots i^{\alpha_i}\cdots s^{\alpha_s})$, meaning that there are $\alpha_i$ parts of size $i$. In particular we recover $n = \sum_ii\alpha_i$.
The \emph{automorphism group} of $\alpha$ is the product of symmetric groups $G_\alpha=\prod_i\mathfrak S_{\alpha_i}$. 
\end{notation}

\begin{example}
If $R = \Z$, $A(t) = 1+\sum_{n>0}A_nt^n \in \Z\llbracket t \rrbracket$ and $m\in \mathbb N$, the known formula \cite[p.~40]{RPStanley}
\[
A(t)^m=1+\sum_{n\geq 0}\sum_{\alpha\vdash n}\,\,\left(\prod_{i=0}^{||\alpha||-1}(m-i)\cdot \frac{\prod_iA_i^{\alpha_i}}{\prod_i\alpha_i!}\right)t^{n}
\]
defines a power structure on $\Z$, where we have set $||\alpha|| = \sum_i\alpha_i$.
\end{example}

Gusein-Zade, Luengo and Melle-Hern{\'a}ndez have proved \cite[Thm.~2]{GLMps} that there is a unique power structure 
\[
(A(t),m) \mapsto A(t)^{m}
\]
on $K_0(\Var_{\C})$ extending the one defined in \emph{loc.~cit.}~on the semiring $S_0(\Var_{\C})$ of effective motives. The latter is given by the formula
\be\label{eqn:power_formula}
A(t)^{[X]}=1+\sum_{n\geq 0}\sum_{\alpha\vdash n}\pi_{G_\alpha}\Biggl(\Biggl[\prod_i X^{\alpha_i}\setminus \Delta\Biggr]\cdot \prod_i A_i^{\otimes \alpha_i}\Biggr)t^{n}.
\ee
Here, $\Delta\subset \prod_i X^{\alpha_i}$  is the ``big diagonal'' (the locus in the product where at least two entries are equal), and the product in big round brackets 
is a $G_\alpha$-equivariant motive in $\widetilde{K}_0^{G_\alpha}(\Var_{\C})$, thanks to the power map \eqref{powermap}.

\begin{remark}
We will not encounter non-effective coefficients in this paper, so we will have direct access to Formula \eqref{eqn:power_formula}.
\end{remark}

\subsection{Motivic exponential}\label{sec:Motivic_Exp}
The \emph{motivic exponential} is a group isomorphism
\[
\Exp\colon tK_0(\Var_{\C})\llbracket t\rrbracket\,\widetilde{\to}\,1+tK_0(\Var_{\C})\llbracket t\rrbracket,
\]
converting sums into products and preserving effectiveness. If $A = \sum_{n>0}A_n t^n$ is an effective power series, one has by definition
\[
\Exp\left(\sum_{n>0}A_nt^n \right) = \prod_{n>0}\,\bigl(1-t^n\bigr)^{-A_n},
\]
and if $A$ and $B$ are effective, one sets
\be\label{General_Exp}
\Exp(A-B) = \prod_{n>0}\,\bigl(1-t^n\bigr)^{-A_n}\cdot \left(\prod_{n>0}\,\bigl(1-t^n\bigr)^{-B_n}\right)^{-1}.
\ee
More generally, if $(S, \nu\colon S\times S\rightarrow S)$ is a commutative monoid in the category of schemes, with a submonoid $S_+ \subset S$ such that the induced map $\coprod_{n\geq 1}S_+^{\times n}\rightarrow S$ is of finite type, we similarly define
\[
\Exp_{\nu}(A)=\sum_{n\geq 0} \sigma_\nu^n(A)
\]
on effective classes, and for $A$ and $B$ two effective classes, we define $\Exp_\nu(A-B)$ by the analogue of \eqref{General_Exp}, i.e.~by $\Exp_\nu(A)\cdot \Exp_\nu(B)^{-1}$.

\subsection{Motives over symmetric products}\label{sec:symm_products_cup}

The machinery described so far will be applied to the following situation. For a variety $X$, we will consider $(\Sym(X),\cup)$, where
\[
\Sym(X) = \coprod_{n\geq 0}\Sym^n(X)
\]
can be viewed as a monoid via the morphism
\[
\Sym(X)\times\Sym(X)\xrightarrow{\cup} \Sym(X)
\]
sending two zero-cycles (with multiplicities) on $X$ to their union. The submonoid $\Sym(X)_+ = \coprod_{n>0}\Sym^n(X)$ allows one to construct the map $\Exp_\cup$ as in Section \ref{sec:Motivic_Exp}.

In order to recover a formal power series in $K_0(\Var_{\C})\llbracket t \rrbracket$ from a relative motive over $\Sym(X)$, we consider the operation 
\be\label{Power_Series_Sym}
\#_!\left(\sum_{n\geq 0}\,\bigl[Y_n\to \Sym^n X\bigr]\right)=\sum_{n\geq 0}\, [Y_n]t^n.
\ee
In other words we take the direct image along the ``tautological'' map $\#\colon \Sym(X)\rightarrow\mathbb{N}$ which collapses $\Sym^n(X)$ onto the point $n$. In the right hand side of \eqref{Power_Series_Sym}, we use the isomorphism \eqref{Iso_Power_series} to identify relative motives over $\mathbb N$ and formal power series with coefficients in $K_0(\Var_{\C})$.

The following result, a special case of \cite[Prop.~1.12]{DavisonR}, will be needed in the proof of Theorem \ref{thm:Relative_Exp}.

\begin{lemma}\label{Preparation_Exp}
Let $U$ be a variety and let $\Delta_n\colon U \to \Sym^nU$ be the small diagonal. Let $A=\sum_{n>0} A_n$ be an effective motive over $\mathbb N_{>0}$ and set $B = \Exp(A) = 1+\sum_{n>0} B_n$. Define
\[
\mathsf Z = \sum_{n\geq 0}\sum_{\alpha\vdash n} \cup_!\pi_{G_\alpha}j_\alpha^\ast
\left(\underset{i\mid \alpha_i\neq 0}{\Boxtimes}\Delta_{i\,!}\bigl(\bigl[U\xrightarrow{\id}U\bigr]\boxtimes
B_i\bigr)^{\otimes \alpha_i}\right) \in K_0(\Var_{\Sym U}),
\]
where $j_\alpha$ is the $G_\alpha$-equivariant open immersion $\prod_i \Sym^i(U)^{\alpha_i}\setminus \Delta \into \prod_i \Sym^i(U)^{\alpha_i}$. Then there is an identity
\[
\mathsf Z = \Exp_\cup\left(\sum_{n>0} A_n \boxtimes \bigl[ U\xrightarrow{\Delta_n} \Sym^nU\bigr]\right)
\]
 Moreover,
\[
\#_! \Exp_\cup\left(\sum_{n>0} A_n \boxtimes \bigl[ U\xrightarrow{\Delta_n} \Sym^nU\bigr]\right) = B^{[U]}\,\in\,K_0(\Var_{\C})\llbracket t \rrbracket.
\]
\end{lemma}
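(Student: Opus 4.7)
The plan is to prove both identities by expanding $\Exp_\cup$ through its lambda-ring operations and matching the result with the formula for $\mathsf Z$; the second identity will then follow from the first by applying $\#_!$ and invoking the power structure formula \eqref{eqn:power_formula}.

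For the first identity, I would use that $\Exp_\cup$ turns sums into convolutions, namely $\Exp_\cup(x+y)=\cup_!(\Exp_\cup(x)\boxtimes\Exp_\cup(y))$, to rewrite
\[
\Exp_\cup\Biggl(\sum_{i>0} A_i \boxtimes \bigl[U\xrightarrow{\Delta_i}\Sym^iU\bigr]\Biggr)
\]
degree by degree as a finite convolution over $i>0$, with each factor equal to $\sum_{k\geq 0}\sigma^k_\cup\bigl(A_i\boxtimes[U\xrightarrow{\Delta_i}\Sym^iU]\bigr)$. Expanding and collecting by the tuple $(k_i)_i$ — equivalently by a partition $\beta\vdash n$ with $\beta_i=k_i$ — and using $\sigma^{k_i}_\cup(\cdot)=\bar\cup_!\pi_{\mathfrak S_{k_i}}((\cdot)^{\otimes k_i})$, each resulting term is a symmetric quotient of a tensor power supported on the image of $U^{k_i}\hookrightarrow(\Sym^iU)^{k_i}$. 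On the other side, substituting the plethystic expansion $B_i=\sum_{\gamma\vdash i}\prod_j\sigma^{\gamma_j}(A_j)$ (from $B=\Exp(A)$) into the definition of $\mathsf Z$ produces a double sum indexed by an outer partition $\alpha\vdash n$ together with inner partitions $\gamma^{(i,j)}\vdash i$; I would then re-collect this as a single sum over partitions $\beta\vdash n$ and match term by term with the $\Exp_\cup$ expansion. The open restriction $j_\alpha^*$ to the complement of the big diagonal in $\prod_i(\Sym^iU)^{\alpha_i}$ is what ensures the $G_\alpha$-action on the support becomes free, matching the symmetric quotients $\pi_{\mathfrak S_{k_i}}$ in $\sigma^{k_i}_\cup$.

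For the second identity I would apply $\#_!$ to the first. Since $\#_!$ retains only the underlying absolute class (times $t^n$ in degree $n$), it passes harmlessly through $\cup_!$, giving
\[
\#_!\mathsf Z = \sum_{n\geq 0}\sum_{\alpha\vdash n}\#_!\pi_{G_\alpha}j_\alpha^{\ast}\Biggl(\Boxtimes_i\Delta_{i,!}\bigl([U\xrightarrow{\id}U]\boxtimes B_i\bigr)^{\otimes\alpha_i}\Biggr)t^n.
\]
Now $\Delta_i$ is a closed immersion, so $\Delta_{i,!}([U\xrightarrow{\id}U]\boxtimes B_i)$ is supported on $\Delta_i(U)\subset\Sym^iU$, and the preimage of the big diagonal of $\prod_i(\Sym^iU)^{\alpha_i}$ under $\prod_i\Delta_i^{\alpha_i}$ is exactly the big diagonal of $\prod_iU^{\alpha_i}$. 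Proper base change therefore reduces the inner expression to $\bigl[\prod_iU^{\alpha_i}\setminus\Delta\bigr]\cdot\prod_iB_i^{\otimes\alpha_i}$, and $\pi_{G_\alpha}$ together with the sum over $\alpha$ reproduces precisely the power structure formula \eqref{eqn:power_formula} for $B^{[U]}$.

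The main obstacle will be the combinatorial bookkeeping in the first identity: the wreath-product symmetries $G_\alpha=\prod_i\mathfrak S_{\alpha_i}$ in $\mathsf Z$ need to match the symmetries arising in the $\Exp_\cup$ expansion under re-indexing of partitions, and the identity $B_i=\Exp(A)_i$ must hold at the level of equivariant motives, not only after quotienting. This is the plethystic identity carried out in detail in \cite[Prop.~1.12]{DavisonR}, whose proof I would closely follow.
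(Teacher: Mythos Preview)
The paper does not give its own proof of this lemma: it is stated as a special case of \cite[Prop.~1.12]{DavisonR} and used without further argument. Your proposal amounts to a sketch of precisely that proof, and you explicitly say you would follow \cite[Prop.~1.12]{DavisonR}; so your approach and the paper's are the same, and the outline (expand $\Exp_\cup$ multiplicatively, match partitions via the plethystic relation $B=\Exp(A)$, then push forward along $\#$ and recognise the power structure formula \eqref{eqn:power_formula}) is correct.
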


We briefly explain how to read the right hand side of the first equation of the lemma. First of all, we view $\cup$ as a map $\Sym(U)^b \to \Sym(U)$ for any $b>0$. The map $\pi_{G_\alpha}$ appearing in the definition of $\mathsf Z$ sends a $G_\alpha$-equivariant relative motive over $\prod_i \Sym^i(U)^{\alpha_i}\setminus \Delta$ to a relative motive over $\prod_i \Sym^{i\alpha_i}(U)\setminus \Delta$, therefore we can apply the direct image $\cup_!$ to get a relative motive over $\Sym^nU$, where $n = \sum_i i\alpha_i$.

\section{The motive of the Quot scheme}

\subsection{Main characters}
Let $X$ be a smooth quasi-projective variety of dimension $d$. Let $E$ be a rank $r$ locally free sheaf on $X$. For a given integer $n\geq 0$, the Quot scheme
\[
\Quot_X(E,n)
\]
parameterises quotients $E\onto Q$ such that
\[
\dim\, (\Supp Q) = 0,\quad \chi(Q) = n.
\]
The Quot-to-Chow map 
\[
\sigma_n\colon \Quot_X(E,n) \to \Sym^n X
\]
constructed in \cite[Section 6]{Grothendieck_Quot} (see also \cite[Cor.~$7.15$]{Rydh1} for a modern treatment) takes a quotient $E\onto Q$ to the zero-cycle (with multiplicities) determined by the set-theoretic support of $Q$. We define the \emph{punctual Quot scheme} to be the preimage
\[
\Quot_X(E,n)_p = \sigma_n^{-1}(n\cdot p)
\]
of the cycle $n\cdot p \in \Sym^nX$, where $p\in X$ is a point. This is easily seen to only depend on a formal neighborhood of $p\in X$ (but not on $p$, $X$ or $E$).
In particular, one has isomorphisms
\be\label{Punctual_Isomorphisms}
\Quot_X(E,n)_p \cong \Quot_{X}(\O_X^{\oplus r},n)_p \cong \Quot_{\A^d}(\O_{\A^d}^{\oplus r},n)_0
\ee
where $0$ is the origin in $\A^d$. This scheme will be denoted $P_{r,n}$ from now on, and
\[
\mathsf P_{r,n} = \bigl[P_{r,n}\bigr] \,\in \, K_0(\Var_{\C})
\]
will denote its motive. 

We pause for a second to explain how to prove the second isomorphism in \eqref{Punctual_Isomorphisms}. Using smoothness of $X$, we can fix \'etale coordinates around $p \in X$. This means we can find a pair $(U,\varphi)$ where $p \in U \subset X$ is an open neighborhood and $\varphi\colon U\to \mathbb A^d$ is an \'etale map such that $\varphi(p) = 0 \in \mathbb A^d$. As in the proof of \cite[Lemma A.1]{BR18}, we can further shrink $U$ until $U \cap \varphi^{-1}(0)$ is the single point $p$. Then, we consider the open subscheme $W \subset \Quot_{U}(\mathscr O_U^{\oplus r},n) \subset \Quot_X(\mathscr O_X^{\oplus r},n)$ consisting of quotients $\mathscr O_U^{\oplus r} \onto Q$ such that $\varphi|_{\Supp Q}$ is injective. Note that $W$ contains $\Quot_U(\mathscr O_U^{\oplus r},n)_p = \Quot_X(\mathscr O_X^{\oplus r},n)_p$ as a closed subscheme. By \cite[Proposition A.3]{BR18}, sending
\[
\left(\mathscr O_U^{\oplus r} \onto Q\right) \,\,\,\mapsto \,\,\,\left(\mathscr O_{\mathbb A^d}^{\oplus r} \to \varphi_\ast\varphi^\ast \mathscr O_{\mathbb A^d}^{\oplus r} = \varphi_\ast \mathscr O_U^{\oplus r} \onto \varphi_\ast Q \right)
\]
defines an \'etale morphism $\Phi\colon W \to \Quot_{\mathbb A^d}(\mathscr O_{\mathbb A^d}^{\oplus r},n)$. Its restriction 
\begin{equation}\label{restriction_to_punctual}
\Phi^{-1}\left(\Quot_{\mathbb A^d}(\mathscr O_{\mathbb A^d}^{\oplus r},n)_0 \right) \to \Quot_{\mathbb A^d}(\mathscr O_{\mathbb A^d}^{\oplus r},n)_0
\end{equation}
to the punctual Quot scheme of $\mathbb A^d$ is \'etale and bijective, hence an isomorphism. For surjectivity, use that $p$ is the only point in $U\cap \varphi^{-1}(0)$, and for injectivity use that $\varphi|_U$ is an immersion around $p$, so that $\varphi^\ast \varphi_\ast Q \,\widetilde{\to}\,Q$ is an isomorphism for all $Q$ supported entirely at $p$. Finally, again by our choice of $U$, the source of the morphism \eqref{restriction_to_punctual} is naturally identified with $\Quot_U(\mathscr O_U^{\oplus r},n)_p$.

\begin{remark}
The \emph{punctual motives} $\mathsf P_{r,n}$ clearly depend on the dimension $d = \dim X$, but we omit $d$ from the notation. 
\end{remark}

\begin{example}
On a curve (i.e.~if $d = 1$), by \cite[Prop.~2.6]{BFP19} we have
\be\label{Punctual_on_Curve}
\mathsf P_{r,1} = \bigl[\P^{r-1}\bigr].
\ee
\end{example}

\subsection{Absolute motives}
Let $X$ and $E$ be as in the previous section. Define the generating functions
\begin{align*}
    \mathsf P_r(t) &= \sum_{n\geq 0}\,\mathsf P_{r,n}t^n, \\
    \mathsf Z_E(t) &= \sum_{n\geq 0}\,\bigl[\Quot_X(E,n)\bigr] t^n
\end{align*}
in the power series ring $K_0(\Var_{\C})\llbracket t \rrbracket$.
The following result (namely Theorem \ref{thmA} from the Introduction) is the higher rank analogue of the corresponding statement for the Hilbert scheme of points \cite[Thm.~1]{GLMHilb}, obtained by setting $r=1$.

\begin{theorem}\label{prop:Power_Structure_Formula}
Let $X$ be a smooth quasi-projective variety. Let $E$ be a rank $r$ locally free sheaf on $X$.
There is an identity
\be\label{eqn:Z_Power_Structure}
\mathsf Z_E(t) = \mathsf P_r(t)^{[X]}.
\ee
\end{theorem}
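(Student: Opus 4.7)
The plan is to prove the identity by stratifying both sides according to the support type of quotients and matching each stratum with the corresponding term in the explicit power structure formula \eqref{eqn:power_formula} applied to $A(t) = \mathsf{P}_r(t)$. The argument is the natural generalisation of the Hilbert scheme case $r=1$ proved in \cite{GLMHilb}; the only new ingredient is the higher rank version of the étale-local model for the punctual Quot scheme described in the paragraph surrounding \eqref{Punctual_Isomorphisms}.

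First, for every partition $\alpha = (1^{\alpha_1} \cdots i^{\alpha_i}\cdots) \vdash n$, let $\Sym^n(X)_\alpha \subset \Sym^n(X)$ be the locally closed stratum of zero-cycles with exactly $\alpha_i$ distinct points of multiplicity $i$, and set $\Quot_X(E,n)_\alpha = \sigma_n^{-1}(\Sym^n(X)_\alpha)$. The scissor relations give $[\Quot_X(E,n)] = \sum_{\alpha \vdash n} [\Quot_X(E,n)_\alpha]$. Next I would pull back along the étale $G_\alpha$-torsor $U_\alpha := \prod_i X^{\alpha_i} \setminus \Delta \to \Sym^n(X)_\alpha$, obtaining $\widetilde{Q}_\alpha := \Quot_X(E,n)_\alpha \times_{\Sym^n(X)_\alpha} U_\alpha$, and show that this is piecewise trivial over $U_\alpha$ with fibre $\prod_i P_{r,i}^{\alpha_i}$, $G_\alpha$-equivariantly, so that
\[
[\widetilde{Q}_\alpha] = [U_\alpha] \cdot \prod_i \mathsf{P}_{r,i}^{\otimes \alpha_i} \quad \text{in } \widetilde{K}_0^{G_\alpha}(\Var_\C).
\]
Granting this, applying the quotient map $\pi_{G_\alpha}$ from \eqref{map:quot1map} (legitimate because $G_\alpha$ acts freely on $U_\alpha$) yields
\[
[\Quot_X(E,n)_\alpha] = \pi_{G_\alpha}\Biggl(\Biggl[\prod_i X^{\alpha_i} \setminus \Delta\Biggr] \cdot \prod_i \mathsf{P}_{r,i}^{\otimes \alpha_i}\Biggr),
\]
and summing over $\alpha \vdash n$ matches the coefficient of $t^n$ on the right-hand side of \eqref{eqn:Z_Power_Structure} with the explicit formula \eqref{eqn:power_formula}.

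The main obstacle lies in constructing the $G_\alpha$-equivariant piecewise trivialisation. The underlying geometric fact is that a quotient $E \onto Q$ with support of type $\alpha$ decomposes uniquely as $Q = \bigoplus_{i,j} Q_{i,j}$, with $Q_{i,j}$ of length $i$ supported at the $j$-th point of multiplicity $i$ in the configuration. Each factor depends only on the formal neighbourhood of the relevant configuration point, and by Zariski-locally trivialising $E$ together with the étale-local model provided by \eqref{restriction_to_punctual}, each local Quot factor can be identified with $P_{r,i}$ over a suitable open piece of $U_\alpha$. Patching these pointwise identifications into a bona fide piecewise trivial fibration is the delicate part, but once this is in place the rest of the argument is a purely combinatorial matching with \eqref{eqn:power_formula}.
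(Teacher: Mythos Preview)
Your proposal is correct and follows essentially the same strategy as the paper's own proof: stratify the Quot scheme by support type, pull back each stratum to the ordered configuration space $\prod_i X^{\alpha_i}\setminus\Delta$ to obtain a free $G_\alpha$-cover, identify the resulting equivariant motive as $[\prod_i X^{\alpha_i}\setminus\Delta]\cdot\prod_i\mathsf P_{r,i}^{\otimes\alpha_i}$ via a $G_\alpha$-equivariant piecewise trivialisation, and apply $\pi_{G_\alpha}$. The paper constructs your $\widetilde Q_\alpha$ (there called $Z_\alpha$) slightly differently, via an \'etale ``union of points'' map $V_\alpha\subset\prod_i\Quot_X(E,i)^{\alpha_i}\to\Quot_X(E,n)$ and then observes that $Z_\alpha$ also fibres over $\prod_i X^{\alpha_i}\setminus\Delta$; but this is the same scheme as your fibre product, and the remainder of the argument is identical.
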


\begin{proof}
For $\alpha$ a partition of $n$, let $\Sym^\alpha X\subset \Sym^nX$ be the locally closed subvariety parameterising zero-cycles whose support is distributed according to $\alpha$. We get a motivic decomposition
\be\label{eqn:Stratification}
\bigl[\Quot_X(E,n)\bigr] = \sum_{\alpha\vdash n}\,\bigl[\Quot_X(E,n)_{\alpha}\bigr],
\ee
where we have set $\Quot_X(E,n)_\alpha = \sigma_n^{-1}(\Sym^\alpha X)$. By standard arguments (see e.g.~\cite[Sec.~4]{BFHilb} and \cite[Sec.~3]{LocalDT}), one sees that the deepest stratum of the Quot-to-Chow map
\[
\sigma_{(n)}\colon \Quot_X(E,n)_{(n)} \to X
\]
is a Zariski locally trivial fibration with fibre $P_{r,n}$. This relies on the local case $X=\A^d$, where one has a global decomposition
\[
\Quot_{\A^d}(\O^{\oplus r},n)_{(n)} \cong \A^d \times P_{r,n}
\]
under which $\sigma_{(n)}$ is identified with the first projection. 

For a fixed partition $\alpha\vdash n$, let
\[
V_\alpha \into \prod_{i}\Quot_X(E,i)^{\alpha_i}
\]
be the open subscheme parameterising finite quotients with disjoint supports. By \cite[Prop.~A.3]{BR18} (but see also \cite[Lemma 4.10]{BFHilb} for the Hilbert scheme version), taking the union of points gives an \'etale map 
\[
u_\alpha\colon V_\alpha \to \Quot_X(E,n)
\]
and we let $U_\alpha$ denote its image. The stratum $\Quot_X(E,n)_\alpha$ sits inside $U_\alpha$ as a closed subscheme. We let the cartesian diagram
\be\label{diagram1}
\begin{tikzcd}[row sep = large]
Z_\alpha\MySymb{dr}\arrow[hook]{r}\arrow[swap]{d}{\widetilde u_\alpha} & V_\alpha \arrow{d}{u_\alpha}\\
\Quot_X(E,n)_\alpha \arrow[hook]{r} & U_\alpha
\end{tikzcd}
\ee
define the scheme $Z_\alpha$. The map $\widetilde u_\alpha$ is a finite \'etale cover with Galois group $G_\alpha$, in particular we have
\be\label{Quot_Quotient}
\Quot_X(E,n)_\alpha = Z_\alpha/G_\alpha.
\ee
In fact, $Z_\alpha$ can also be realised as the fibre product
\be\label{diagram2}
\begin{tikzcd}[row sep = large]
Z_\alpha\MySymb{dr}\arrow[hook]{r}{}\arrow[swap]{d}{f_\alpha} & \prod_i \Quot_X(E,i)_{(i)}^{\alpha_i}\arrow{d}\\
\prod_iX^{\alpha_i}\setminus \Delta \arrow[hook]{r}{} & \prod_iX^{\alpha_i}
\end{tikzcd}
\ee
where the bottom open immersion is the complement of the big diagonal and the map $f_\alpha$ is a $G_\alpha$-equivariant piecewise trivial fibration with fibre $ \prod_iP_{r,i}^{\alpha_i}$. This implies the identity
\[
\bigl[Z_\alpha\bigr] = \left[\prod_i X^{\alpha_i}\setminus \Delta\right] \cdot \prod_i\,\mathsf P_{r,i}^{{\otimes}\alpha_i}
\]
in $K_0^{G_\alpha}(\Var_{\C})$. 
Using \eqref{Quot_Quotient}, it follows that
\begin{align*}
    \bigl[\Quot_X(E,n)_\alpha\bigr] 
     \,&=\,\pi_{G_\alpha}\bigl[Z_\alpha\bigr] \\
     \,&=\,\pi_{G_\alpha}\left(\left[\prod_i X^{\alpha_i}\setminus \Delta\right] \cdot \prod_{i}\,\mathsf P_{r,i}^{{\otimes}\alpha_i}\right),
\end{align*}
where $\pi_{G_\alpha}\colon K_0^{G_\alpha}(\Var_{\C}) \to K_0(\Var_{\C})$ is the quotient map extending \eqref{map:quot1map}.
Since the classes $\mathsf P_{r,i}$ are effective, combining the decomposition \eqref{eqn:Stratification} with the power structure formula \eqref{eqn:power_formula} and summing over $n$ proves the result.
\end{proof}

The following is a generalisation of \cite[Thm.~4.1]{BFP19} to arbitrary varieties.

\begin{corollary}
The series $\mathsf Z_E(t)$ does not depend on $E$. In particular, the identity
\[
\bigl[\Quot_X(E,n)\bigr] = \bigl[\Quot_X(\O_X^{\oplus r},n)\bigr]
\]
holds in $K_0(\Var_{\C})$ for all locally free sheaves $E$ of rank $r$ on $X$.
\end{corollary}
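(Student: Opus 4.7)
The plan is to observe that this corollary is essentially a direct consequence of Theorem \ref{prop:Power_Structure_Formula}. The key point is that the right-hand side of the identity
\[
\mathsf Z_E(t) = \mathsf P_r(t)^{[X]}
\]
depends only on the rank $r$ of $E$ and on the class $[X] \in K_0(\Var_{\C})$, but does not involve $E$ in any other way. Indeed, $\mathsf P_r(t) = \sum_{n \geq 0} \mathsf P_{r,n} t^n$ is defined in terms of the punctual Quot schemes $P_{r,n}$, which by the isomorphisms \eqref{Punctual_Isomorphisms} are identified with $\Quot_{\A^d}(\O_{\A^d}^{\oplus r},n)_0$ and therefore involve neither $X$ nor $E$ (only $r$ and $d = \dim X$).

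First, I would apply Theorem \ref{prop:Power_Structure_Formula} once to the pair $(X,E)$ to obtain $\mathsf Z_E(t) = \mathsf P_r(t)^{[X]}$, and then a second time to the pair $(X,\O_X^{\oplus r})$ to obtain $\mathsf Z_{\O_X^{\oplus r}}(t) = \mathsf P_r(t)^{[X]}$. Comparing these two equalities yields the identity
\[
\mathsf Z_E(t) = \mathsf Z_{\O_X^{\oplus r}}(t)
\]
in $K_0(\Var_{\C})\llbracket t \rrbracket$. Since two formal power series are equal if and only if their coefficients coincide, extracting the coefficient of $t^n$ on both sides gives the pointwise equality $[\Quot_X(E,n)] = [\Quot_X(\O_X^{\oplus r},n)]$ in $K_0(\Var_{\C})$ for every $n \geq 0$.

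There is no substantial obstacle here: once Theorem \ref{prop:Power_Structure_Formula} is available, the corollary follows by simple inspection of what data actually enter the right-hand side. The only point worth emphasising is that the reduction relies on the isomorphisms \eqref{Punctual_Isomorphisms}, which guarantee that $\mathsf P_{r,n}$ is genuinely an invariant of $(r,n,d)$ and in particular is insensitive to replacing $E$ by the trivial bundle $\O_X^{\oplus r}$.
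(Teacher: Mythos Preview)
Your proposal is correct and matches the paper's approach exactly: the corollary is stated without proof in the paper because it follows immediately from Theorem~\ref{prop:Power_Structure_Formula} once one notes that $\mathsf P_r(t)^{[X]}$ depends only on $r$ and $[X]$ (via the isomorphisms~\eqref{Punctual_Isomorphisms}), precisely as you argue.
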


\begin{definition}
Define absolute classes $\Omega_{r,n} \in K_0(\Var_{\C})$ via
\be\label{Omega_Classes_Definition}
\Exp\left(\sum_{n>0}\Omega_{r,n}t^n\right) = \mathsf P_r(t).
\ee
\end{definition}

\begin{remark}
In terms of the motivic exponential, we can rephrase Equation \eqref{eqn:Z_Power_Structure} as
\be\label{eqn:Relative_Motive_Exp}
\mathsf Z_E(t) = \Exp\left([X] \sum_{n>0} \Omega_{r,n}t^n \right).
\ee
It is then clear that to determine the series $\mathsf Z_E$ one has to compute the fully punctual classes $\Omega_{r,n}$. We will do this in the case of curves (for arbitrary $r$) in Section \ref{sec:quot_curve_motivic}, and for surfaces (only for $r=1$) in Section \ref{sec:sufaces}.
\end{remark}

\subsection{Relative motives}
Let $(X,E)$ be as in the previous sections. Consider the relative motive
\[
\mathsf Z_E^{\rel} =  \sum_{n\geq 0}\,\left[\Quot_X(E,n) \xrightarrow{\sigma_n} \Sym^n X\right]  \,\in\, K_0(\Var_{\Sym X}).
\]
In other words, $\mathsf Z_E^{\rel} = [\Quot_X(E) \to \Sym X]$, the class of $\Quot_X(E) = \coprod_{n} \Quot_X(E,n)$ over $\Sym X$.
Note that $\mathsf Z_E^{\rel}$ is a refinement of $\mathsf Z_E$, in the sense that
\[
\#_!\mathsf Z_E^{\rel} = \mathsf Z_E(t),
\]
where $\#_!$ is the operation introduced in \eqref{Power_Series_Sym}.

We simply write
\[
\mathsf Z^{\rel}(X,r) = \mathsf Z^{\rel}_{\O^{\oplus r}}
\]
when $E = \O^{\oplus r}$ is the trivial bundle over $X$.
We will show below (Theorem \ref{thm:Relative_Exp}) that the relative motive $\mathsf Z^{\rel}(\A^d,r) \in K_0(\Var_{\Sym \A^d})$ is generated under $\Exp_\cup$ by the motives $\Omega_{r,n}$ defined in \eqref{Omega_Classes_Definition}, extended on the small diagonal
\[
\A^d\xrightarrow{\Delta_n} \Sym^n \A^d.
\]

\begin{example}
Set $r=1$, $d=1$ (i.e.~we consider line bundles on curves). Then $\Quot_X(L,n) = \Hilb^nX = \Sym^nX$ for all line bundles $L$ on $X$, and
\[
\mathsf Z^{\rel}(X,1) = \mathsf Z_{\O_X}^{\rel} = \bigl[\Sym X \xrightarrow{\id}\Sym X\bigr] = \mathbb 1 \,\in\,K_0(\Var_{\Sym X}).
\]
Pushing this forward via $\#$ yields
\[
\mathsf Z_{\O_X}(t) = \sum_{n\geq 0}\,\bigl[\Sym^nX\bigr]t^n = \zeta_X(t),
\]
the Kapranov \emph{motivic zeta function} of the curve $X$. 
\end{example}

\begin{remark}
By definition of the power structure and of the motivic exponential, one has
\[
\zeta_Y(t) = (1-t)^{-[Y]} = \Exp([Y]t),
\]
for every variety $Y$. Moreover, the identities
\be\label{zeta_Shifted}
\zeta_Y(\L^st) = \zeta_{\A^s\times Y}(t) = \Exp(\L^s[Y]t)
\ee
hold in $K_0(\Var_{\C})\llbracket t \rrbracket$ for every $s \in \mathbb N$.
\end{remark}

\smallbreak
We now prove Theorem \ref{thmB} from the Introduction. 

Before we begin, let us observe that for a morphism of varieties $f\colon S\to T$  and an integer $n>0$, there is a commutative diagram
\be\label{PF_and_Power_Map}
\begin{tikzcd}[row sep = large]
K_0(\Var_S)\CommDiag{dr} \arrow{r}{f_!}\arrow[swap]{d}{(\,\cdot\,)^{\otimes n}} & K_0(\Var_T)\arrow{d}{(\,\cdot\,)^{\otimes n}} \\
\widetilde{K}_0^{\mathfrak S_n}(\Var_{S^n}) \arrow[swap]{r}{f^n_!} & \widetilde{K}_0^{\mathfrak S_n}(\Var_{T^n})
\end{tikzcd}
\ee
where $(\,\cdot\,)^{\otimes n}$ is the power map \eqref{powermap}.

\begin{theorem}\label{thm:Relative_Exp}
There is an identity
\[
\mathsf Z^{\rel}(\A^d,r) = \Exp_{\cup}\left(\sum_{n>0} \Omega_{r,n} \boxtimes \bigl[\A^d\xrightarrow{\Delta_n}\Sym^n\A^d\bigr] \right)\,\in\,K_0(\Var_{\Sym \A^d}).
\]
\end{theorem}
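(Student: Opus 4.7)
The plan is to refine the stratification argument from the proof of Theorem \ref{prop:Power_Structure_Formula} to the relative setting over $\Sym\A^d$, and then match the outcome against the right hand side of the first identity in Lemma \ref{Preparation_Exp} applied to $U=\A^d$ with $B_i=\mathsf P_{r,i}$; by the definition \eqref{Omega_Classes_Definition} this automatically forces $A_i=\Omega_{r,i}$, delivering the claimed formula.

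First I would upgrade the partition stratification \eqref{eqn:Stratification} to a relative identity
\[
\bigl[\Quot_{\A^d}(\O^{\oplus r},n)\xrightarrow{\sigma_n}\Sym^n\A^d\bigr]=\sum_{\alpha\vdash n}\bigl[\Quot_{\A^d}(\O^{\oplus r},n)_\alpha\xrightarrow{\sigma_n}\Sym^n\A^d\bigr]
\]
in $K_0(\Var_{\Sym^n\A^d})$. The key feature of $X=\A^d$ that is not available in the general proof is the \emph{global} splitting $\Quot_{\A^d}(\O^{\oplus r},i)_{(i)}\cong \A^d\times P_{r,i}$ with $\sigma_{(i)}$ being the first projection. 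Since the composite $\A^d\times P_{r,i}\to \A^d\to \Sym^i\A^d$ factors through the small diagonal $\Delta_i$, this yields the global relative identity
\[
\bigl[\Quot_{\A^d}(\O^{\oplus r},i)_{(i)}\xrightarrow{\sigma_{(i)}}\Sym^i\A^d\bigr]=\Delta_{i\,!}\bigl([\A^d\xrightarrow{\id}\A^d]\boxtimes \mathsf P_{r,i}\bigr),
\]
which is the basic building block to be matched with the ingredients of Lemma \ref{Preparation_Exp}.

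Next, for a general $\alpha\vdash n$, I would combine this building block with the fibre product diagram \eqref{diagram2}, the $G_\alpha$-quotient description \eqref{Quot_Quotient}, and the compatibilities \eqref{PF_Box_Product}, \eqref{PF_and_Power_Map} between $\boxtimes$, direct image, and the power map, in order to upgrade the absolute computation of $[\Quot_X(E,n)_\alpha]$ from the proof of Theorem \ref{prop:Power_Structure_Formula} to the relative statement
\[
\bigl[\Quot_{\A^d}(\O^{\oplus r},n)_\alpha\xrightarrow{\sigma_n}\Sym^n\A^d\bigr]=\cup_!\,\pi_{G_\alpha}\,j_\alpha^{\ast}\Biggl(\underset{i\mid \alpha_i\neq 0}{\Boxtimes}\Delta_{i\,!}\bigl([\A^d\xrightarrow{\id}\A^d]\boxtimes\mathsf P_{r,i}\bigr)^{\otimes\alpha_i}\Biggr).
\]
This is precisely the $(n,\alpha)$-term in the definition of $\mathsf Z$ in Lemma \ref{Preparation_Exp} with $U=\A^d$ and $B_i=\mathsf P_{r,i}$, so summing over $n$ and $\alpha$ yields $\mathsf Z^{\rel}(\A^d,r)=\mathsf Z$, and the first equation in that lemma concludes the argument.

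The main obstacle will be the bookkeeping inside the third step: one must check carefully that, under the identification of $\Sym^\alpha\A^d$ with the $G_\alpha$-quotient of $\prod_i(\A^d)^{\alpha_i}\setminus\Delta$, the restriction $\sigma_n|_{\Quot_{\A^d}(\O^{\oplus r},n)_\alpha}$ factors as the small-diagonal embedding into $\prod_i(\Sym^i\A^d)^{\alpha_i}$, followed by the restriction $j_\alpha^\ast$ off the big diagonal, the $G_\alpha$-quotient $\pi_{G_\alpha}$, and the union map $\cup$. Once this equivariant compatibility is in place, the structural content of the proof is entirely parallel to the absolute argument, with the new ingredients $\Delta_{i\,!}$ and $\cup_!$ encoding the additional information of how the supports of quotients sit inside $\Sym^n\A^d$.
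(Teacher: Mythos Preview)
Your proposal is correct and follows essentially the same route as the paper: stratify $\mathsf Z^{\rel}(\A^d,r)$ by partitions, use the global splitting $\Quot_{\A^d}(\O^{\oplus r},i)_{(i)}\cong \A^d\times P_{r,i}$ to identify the deepest stratum with $\Delta_{i\,!}([\A^d\xrightarrow{\id}\A^d]\boxtimes\mathsf P_{r,i})$, then combine diagram \eqref{diagram2} with the base change, $\boxtimes$, and power-map compatibilities to rewrite each $\alpha$-stratum in the form required by Lemma \ref{Preparation_Exp}. The only point you leave slightly implicit is the effectiveness of the $\Omega_{r,n}$ (needed to invoke the lemma), which the paper asserts follows from the effectiveness of the $\mathsf P_{r,n}$; otherwise your outline and the paper's argument coincide.
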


\begin{proof}
For a partition $\alpha\vdash n$, set $Q^n_\alpha = \Quot_{\A^d}(\O^{\oplus r},n)_{\alpha}$. One has a decomposition
\[
\mathsf Z^{\rel}(\A^d,r) =
\sum_{n\geq 0}\sum_{\alpha\vdash n} \,\bigl[ Q^n_\alpha \to \Sym^n \A^d\bigr].
\]
Let us consider the $G_\alpha$-equivariant cartesian diagram
\be\label{Complement_Big_Diagonal}
\begin{tikzcd}[row sep = large]
Z_{\alpha}\MySymb{dr}\arrow{d} \arrow[hook]{r} & \prod_i Q^{\alpha_i}_{(i)}\arrow{d} \\
\prod_i (\A^d)^{\alpha_i}\setminus \Delta \MySymb{dr} \arrow[hook]{r}{\iota_\alpha}\arrow[hook]{d}{\Delta}  & \prod_i (\A^d)^{\alpha_i} \arrow[hook]{d}{\Delta} \\
\prod_i \Sym^i(\A^d)^{\alpha_i}\setminus \Delta \arrow[hook]{r}{j_\alpha} & \prod_i \Sym^i(\A^d)^{\alpha_i}
\end{tikzcd}
\ee
where the top square is Diagram \eqref{diagram2}, the horizontal maps are open immersions (the complements of the big diagonals) and the vertical inclusions are products of small diagonals. We have a base change identity
\be\label{Base_Change_Motives}
j_\alpha^\ast \Delta_! = \Delta_!\iota_\alpha^\ast.
\ee
On the deepest stratum, we have a commutative diagram
\[
\begin{tikzcd}
Q^n_{(n)} \arrow[swap]{d}{\sigma_{(n)}} \arrow{r}{\sim} & \A^d \times P_{r,n} \arrow{dl}{\textrm{pr}_1} \\
\A^d &
\end{tikzcd}
\]
inducing an identity
\be\label{Fully_Punctual_Motives}
\bigl[Q^n_{(n)}\to \A^d\bigr] = \bigl[\A^d\xrightarrow{\id}\A^d\bigr] \boxtimes \mathsf P_{r,n}\,\in\, K_0(\Var_{\A^d}). 
\ee
For a general partition $\alpha$ of $n$, consider the equivariant motives
\[
\bigl[Q^i_{(i)}\to \A^d\bigr]^{\otimes \alpha_i}\,\in\,\widetilde K_0^{\mathfrak S_{\alpha_i}}\left(\Var_{(\A^d)^{\alpha_i}}\right).
\]
If $\iota_\alpha$ is as in Diagram \eqref{Complement_Big_Diagonal}, one has
\[
\Delta_!\left[Z_\alpha \to \prod_i\,(\A^d)^{\alpha_i}\setminus \Delta \right] = \Delta_!\iota_\alpha^\ast\left(\underset{i\lvert\alpha_i\neq 0}{\Boxtimes}\bigl[Q^i_{(i)}\to \A^d\bigr]^{\otimes \alpha_i}\right)\,\in\, 
\widetilde K_0^{G_\alpha}\left(\Var_{\prod_i \Sym^i(\A^d)^{\alpha_i}\setminus \Delta}\right).
\]
Applying the quotient map $\pi_{G_\alpha}$ to the last identity, followed by the pushforward along the union of points map, we obtain
\begin{align*}
\bigl[ Q^n_\alpha \to \Sym^n \A^d\bigr] &\,=\, \cup_!\pi_{G_\alpha}\Delta_!\iota_\alpha^\ast\left(\underset{i\lvert\alpha_i\neq 0}{\Boxtimes}\bigl[Q^i_{(i)}\to \A^d\bigr]^{\otimes \alpha_i}\right) & \\
&\,=\, \cup_!\pi_{G_\alpha}j_\alpha^\ast \Delta_!\left(\underset{i\lvert\alpha_i\neq 0}{\Boxtimes}\bigl[Q^i_{(i)}\to \A^d\bigr]^{\otimes \alpha_i}\right) & \textrm{by }\eqref{Base_Change_Motives}\\
&\,=\, \cup_!\pi_{G_\alpha}j_\alpha^\ast \left(\underset{i\lvert\alpha_i\neq 0}{\Boxtimes} (\Delta_i^{\alpha_i})_!\bigl[Q^i_{(i)}\to \A^d\bigr]^{\otimes \alpha_i} \right) & \textrm{by }\eqref{PF_Box_Product}\\
&\,=\,\cup_!\pi_{G_\alpha} j_\alpha^\ast\left(
\underset{i\lvert\alpha_i\neq 0}{\Boxtimes} \left(\Delta_{i\,!}\bigl[Q^i_{(i)}\to \A^d\bigr]\right)^{\otimes \alpha_i}\right) & \textrm{by }\eqref{PF_and_Power_Map} \\
&\,=\,\cup_!\pi_{G_\alpha} j_\alpha^\ast\left(
\underset{i\lvert\alpha_i\neq 0}{\Boxtimes} \Delta_{i\,!}\left(\bigl[\A^d\xrightarrow{\id}\A^d\bigr] \boxtimes \mathsf P_{r,i}\right)^{\otimes \alpha_i}\right) & \textrm{by } \eqref{Fully_Punctual_Motives}
\end{align*}
so that summing these classes over all partitions of integers and noting that $\Omega_{r,n}$ are effective (because $\mathsf P_{r,n}$ are effective) yields precisely
\[
\Exp_\cup\left(\sum_{n>0} \Omega_{r,n} \boxtimes \bigl[\A^d\xrightarrow{\Delta_n}\Sym^n\A^d\bigr] \right)
\]
by an application of Lemma \ref{Preparation_Exp}.
\end{proof}

\begin{remark}
By the last part of Lemma \ref{Preparation_Exp}, the theorem implies the formula
\[
\mathsf Z_{\O^{\oplus r}}(t) = \mathsf P_r(t)^{\L^d}
\]
of Theorem \ref{prop:Power_Structure_Formula} for $X=\A^d$.
\end{remark}

\subsection{Related work on more general Quot schemes}
The theory developed so far relies crucially on the locally free assumption on $E$. Indeed, the isomorphisms \eqref{Punctual_Isomorphisms} fail even if $E$ is, say, reflexive but not locally free. However, the geometry of the Quot scheme can be interesting also in the non-locally free case. For instance, the Quot scheme of finite quotients of the ideal sheaf $\mathscr I_C \subset \O_Y$ of a smooth curve in a $3$-fold $Y$ has been studied in \cite{LocalDT}, where essential local triviality statements on the Quot-to-Chow morphism were proved (see e.g.~Corollary 3.2 in \emph{loc.~cit.}). Moreover, in \cite[Thm.~2.1]{Ricolfi2018} it is proven that $\Quot_Y(\mathscr I_C,n)$ appears as the typical (scheme-theoretic) fibre of the Hilbert--Chow morphism $\Hilb(Y) \to \Chow(Y)$ in a neighborhood of the cycle of the smooth curve $C$. (This holds in all dimensions, not just $3$-folds.) This was used to prove the $C$-local DT/PT correspondence for Calabi--Yau $3$-folds \cite[Thm.~1.1]{Ricolfi2018}. 
The (virtual) motivic theory of $\Quot_Y(\mathscr I_C,n)$ was developed in \cite{DavisonR}. 

The enumerative geometry of $\Quot_X(E,n)$, for $E$ a sheaf of homological dimension at most one on a $3$-fold, was studied in \cite{BR18} and related to the local Pandharipande--Thomas theory of $X$. The Appendix in \emph{loc.~cit.}~develops the abstract theory comparing various Quot schemes of smooth quasi-projective varieties, and implicitly shows that the singularities of $\Quot_X(E,n)$ only depend on $n$ and $\dim X$. The (virtual) motivic theory in the locally free case for $3$-folds was developed in \cite{Quot19}, along with a construction of a virtual fundamental class on the Quot scheme.

\section{Calculations: curves and surfaces}\label{sec:curves}

In this section we compute the fully punctual motives
\[
\Omega_{r,n}
\]
in the case of curves, for all $r>0$ and $n>0$, and in the case of surfaces for $r=1$ and all $n>0$.

\subsection{The class of the Quot scheme on a curve}\label{sec:quot_curve_motivic}
We fix a locally free sheaf $E$ of rank $r$ on a smooth quasi-projective curve $C$.

\begin{lemma}\label{lemma_Omega_r_1}
On a curve, we have 
\[
\Omega_{r,1} = \bigl[\mathbb P^{r-1}\bigr].
\]
\end{lemma}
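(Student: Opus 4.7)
The plan is to extract $\Omega_{r,1}$ directly from its defining equation \eqref{Omega_Classes_Definition} by comparing the coefficients of $t$ on both sides. Writing $A(t) = \sum_{n>0}\Omega_{r,n}t^n$, the axioms of the power structure (in particular axiom (6) in Definition \ref{def:power_structure}) together with the definition of $\Exp$ give $\Exp(A(t)) = 1 + \Omega_{r,1}\, t + O(t^2)$, since the only contribution to the linear coefficient can come from $\Omega_{r,1}\, t$ itself. On the other hand the linear coefficient of $\mathsf P_r(t)$ is by definition $\mathsf P_{r,1}$, so the identity $\Exp(A(t)) = \mathsf P_r(t)$ forces
\[
\Omega_{r,1} = \mathsf P_{r,1}.
\]

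To conclude, I would invoke the punctual computation on a curve recalled in \eqref{Punctual_on_Curve}, namely $\mathsf P_{r,1} = [\mathbb P^{r-1}]$, which is the content of \cite[Prop.~2.6]{BFP19}: the punctual Quot scheme parameterising length-one quotients of $\mathscr O_{\A^1}^{\oplus r}$ supported at the origin is isomorphic to $\mathbb P^{r-1}$, because such a quotient is determined by a one-dimensional quotient of the fibre $\C^{\oplus r}$ at the origin.

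There is no real obstacle here: the statement is essentially a formal consequence of the lowest-order behaviour of $\Exp$, combined with a known identification of $\mathsf P_{r,1}$. The only thing worth double-checking is the claim that the linear coefficient of $\Exp$ is the identity, which follows by expanding $\Exp(A_1 t) = (1-t)^{-A_1} = 1 + A_1 t + O(t^2)$ using axiom (6) of the power structure and noting that the $\Omega_{r,n}$ with $n\geq 2$ only contribute starting from $t^2$.
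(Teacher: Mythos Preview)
Your proposal is correct and follows essentially the same route as the paper: both arguments expand $\Exp\bigl(\sum_{n>0}\Omega_{r,n}t^n\bigr) = \prod_{n\geq 1}(1-t^n)^{-\Omega_{r,n}}$, use axioms (6) and (7) of the power structure to see that the linear coefficient is $\Omega_{r,1}$, equate it with $\mathsf P_{r,1}$, and then invoke \eqref{Punctual_on_Curve}.
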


\begin{proof}
By the properties of the power structure, one has
\begin{align*}
    \mathsf P_r(t) &\,=\,\prod_{n\geq 1}\,(1-t^n)^{-\Omega_{r,n}} \\
    &\,=\,\prod_{n\geq 1}\,(1-t)^{-\Omega_{r,n}}\big|_{t\to t^n}\\
    &\,=\,\prod_{n\geq 1}\,(1+\Omega_{r,n}t+\cdots )\big|_{t\to t^n}\\
    &\,=\,\prod_{n\geq 1}\,(1+\Omega_{r,n}t^n+\cdots ),
\end{align*}
which immediately implies
\[
\Omega_{r,1} = \mathsf P_{r,1}.
\]
On the other hand, the equality $\mathsf P_{r,1} = [\P^{r-1}]$ holds by \eqref{Punctual_on_Curve}.
\end{proof}

We now reformulate (and generalise to the quasi-projective case) the main formula proved in \cite[Prop.~4.5]{BFP19}. The following is Theorem \ref{thm_C} from the Introduction.

\begin{theorem}\label{prop_BFP}
There is an identity 
\be\label{eqn:Absolute_Motive_Exp}
\mathsf Z_E(t) = \Exp\left(\left[C\times \P^{r-1}\right]t \right)
\ee
in $K_0(\Var_{\C})\llbracket t \rrbracket$.
Moreover, in $K_0(\Var_{\Sym \A^1})$ there is an identity
\[
\mathsf Z^{\rel}(\A^1,r) = \Exp_\cup\left(\bigl[\P^{r-1}\bigr] \boxtimes \bigl[ \A^1 \xrightarrow{\id} \A^1\bigr] \right).
\]
\end{theorem}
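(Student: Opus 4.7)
Both identities reduce to the computation of the absolute classes $\Omega_{r,n}$ (for $d=1$), namely
\[
\Omega_{r,n} = \begin{cases} [\P^{r-1}] & \text{if } n=1, \\ 0 & \text{if } n\geq 2. \end{cases}
\]
Granting this, the absolute identity follows from the rewriting \eqref{eqn:Relative_Motive_Exp} of Theorem \ref{prop:Power_Structure_Formula}: the inner sum collapses to $[\P^{r-1}] t$, giving $\mathsf Z_E(t) = \Exp([C\times\P^{r-1}]t)$. The relative identity follows from Theorem \ref{thm:Relative_Exp} specialised to $d=1$, since the sum on the right collapses to its $n=1$ summand, leaving $\Exp_\cup\bigl([\P^{r-1}]\boxtimes[\A^1 \xrightarrow{\id}\A^1]\bigr)$.

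The case $n=1$ is Lemma \ref{lemma_Omega_r_1}. For $n\geq 2$ we invoke the main formula of Bagnarol--Fantechi--Perroni \cite[Prop.~4.5]{BFP19}: for a smooth \emph{projective} curve $C$ and a rank-$r$ locally free sheaf $E$, they prove
\[
\mathsf Z_E(t) = \prod_{k=0}^{r-1} \zeta_C(\L^k t),
\]
which by \eqref{zeta_Shifted} rewrites as $\Exp\bigl(\sum_{k=0}^{r-1}\L^k[C] t\bigr) = \Exp([C][\P^{r-1}]t)$. Combining with \eqref{eqn:Relative_Motive_Exp} and using that $\Exp$ is a bijection on $tK_0(\Var_{\C})\llbracket t\rrbracket$, we deduce
\[
[C]\cdot \Omega_{r,n} = 0 \quad \text{for all } n\geq 2
\]
and every smooth projective curve $C$.

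The \emph{main obstacle} is cancelling $[C]$ from this identity: classes in $K_0(\Var_{\C})$ can be zero-divisors (famously $\L$ is one), so this cancellation is not automatic. The cleanest workaround exploits that $\Omega_{r,n}$ is an absolute invariant of the punctual Quot scheme $P_{r,n}$ on $\A^1$, independent of the curve. It therefore suffices to verify directly that $[P_{r,n}] = [\Sym^n\P^{r-1}] = \sigma^n[\P^{r-1}]$, which via \eqref{Omega_Classes_Definition} is equivalent to $\Omega_{r,n}=0$ for $n\geq 2$. This identity can be established by a Bia\l{}ynicki-Birula decomposition of $P_{r,n}$ induced by the $\C^*$-action scaling $\A^1$: the fixed points are the monomial quotients $\bigoplus_{i=1}^r \O/(t^{n_i})$ indexed by compositions $n=n_1+\cdots+n_r$, which number $\binom{n+r-1}{r-1}$ --- in bijection with the torus fixed points of $\Sym^n\P^{r-1}$ --- and matching cell dimensions yields the desired motivic identity. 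The two formulas of the theorem then follow as explained in the first paragraph.
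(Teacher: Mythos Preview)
Your overall plan --- reduce both identities to the computation of the classes $\Omega_{r,n}$, then invoke \eqref{eqn:Relative_Motive_Exp} and Theorem~\ref{thm:Relative_Exp} --- is exactly the paper's. You also reproduce the paper's use of \cite[Prop.~4.5]{BFP19} to obtain $[C]\cdot\Omega_{r,n}=0$ for $n\geq 2$ and every smooth projective $C$, and you correctly flag the zero-divisor obstruction to cancelling $[C]$.

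The divergence is precisely at this cancellation step, and the paper's route is much shorter than yours. The paper observes that $\Omega_{r,n}$ is \emph{effective}, so one may write $\Omega_{r,n}=[Y]$ for an honest variety $Y$; then $0=[C]\cdot\Omega_{r,n}=[C\times Y]$, and since the class of a nonempty variety never vanishes in $K_0(\Var_{\C})$ (apply e.g.\ the Hodge--Deligne polynomial), one concludes $Y=\emptyset$, hence $\Omega_{r,n}=0$. No direct analysis of $P_{r,n}$ is required, and the BFP input is used in an essential way.

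Your proposed workaround --- a Bia\l{}ynicki--Birula computation of $[P_{r,n}]$ matched against $\Sym^n\P^{r-1}$ --- is a genuinely different route, but as written it is only a sketch with a real gap. You have not checked that $P_{r,n}$ is smooth (and $\Sym^n\P^{r-1}$ is singular for $r\geq 3$, $n\geq 2$, so naive BB does not apply on that side either), nor have you computed the attracting-cell dimensions at the fixed points; equality of fixed-point counts only gives equality of Euler characteristics, not of motives. Moreover, if one is prepared to compute $[P_{r,n}]$ from scratch, the entire detour through BFP and the zero-divisor discussion becomes redundant: one could read off $\Omega_{r,n}$ directly from $\mathsf P_r(t)$. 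The paper's effectiveness argument avoids all of this.
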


\begin{proof}
By  \cite[Prop.~4.5]{BFP19}, for projective $C$ one has
\be\label{eqn:BFP_Motive}
\bigl[\Quot_C(E,n)\bigr] = 
\sum_{n_1+\cdots+n_r = n} \bigl[\Sym^{n_1} C\bigr]\cdots \bigl[\Sym^{n_r} C\bigr]\cdot \L^{\sum_{i=0}^{r-1}(i-1)n_i}.
\ee
and it is clear that the generating function $\mathsf Z_E(t)$ of these motives can be expanded as a product of shifted motivic zeta functions. More precisely, one has 
\begin{align*}
\sum_{n \geq 0}\,\bigl[\Quot_C(E,n)\bigr] t^n &\,=\,\prod_{i=1}^r\zeta_C(\L^{i-1}t) \\
&\,=\,\prod_{i=1}^r\Exp([C]\L^{i-i}\cdot t) \\
&\,=\,\Exp\left([C]\sum_{i=1}^r\L^{i-i}\cdot \right) \\
&\,=\,\Exp\left(\bigl[C\times \P^{r-1}\bigr]t \right),
\end{align*}
where the second equality follows by \eqref{zeta_Shifted}.
So the statement is true when $C$ is projective. In this case, comparing \eqref{eqn:Absolute_Motive_Exp} with Equation \eqref{eqn:Relative_Motive_Exp} and using the injectivity of $\Exp$, we obtain the identities
\be\label{C_Omega_Vanishing}
[C] \cdot \Omega_{r,n}
=
\begin{cases}
    [C] \cdot [\P^{r-1}] & \textrm{if }n=1 \\
    0& \textrm{if }n>1.
\end{cases}
\ee
By Equation \eqref{eqn:Relative_Motive_Exp}, to prove the statement on an arbitrary $C$ it is enough to show that
\be\label{Omega_Classes_curve_1}
\Omega_{r,n}
=
\begin{cases}
    [\P^{r-1}] & \textrm{if }n=1 \\
    0& \textrm{if }n>1.
\end{cases}
\ee

By Lemma \ref{lemma_Omega_r_1}, we already know that $\Omega_{r,1} = [\P^{r-1}]$. Finally, the equation $\Omega_{r,n} = 0$ holds for $n>1$  because $\Omega_{r,n}$ is effective. Indeed, write $\Omega_{r,n} = [Y]$ for a variety $Y$, so that $0 = [C]\cdot \Omega_{r,n} = [C\times Y]$. But the class of a variety vanishes if and only if the variety is empty, and this happens if and only if $Y = \emptyset$.

To prove the last assertion, it is enough to combine Theorem \ref{thm:Relative_Exp} with the relations \eqref{Omega_Classes_curve_1}.
\end{proof}

\begin{remark}
The formula \eqref{eqn:BFP_Motive} is proved in \cite{BFP19} over a field $k$ of arbitrary charcateristic.
\end{remark}

\begin{remark}
By Equation \eqref{Omega_Classes_curve_1}, the generating function of the punctual motives can be computed as
\be\label{Punctual_On_Curves}
\mathsf P_r(t) = \Exp([\P^{r-1}]t) = \zeta_{\P^{r-1}}(t) = \prod_{i=0}^{r-1}\frac{1}{1-\L^it}.
\ee
\end{remark}

\subsection{The Hodge numbers of the Quot scheme on a curve}

The Hodge--Deligne polynomial (also called the E-polynomial) of a smooth complex projective variety $Y$ is given by 
\[
\mathsf E(Y;u,v) = \sum_{p,q}(-1)^{p+q}h^{p,q}(Y)u^pv^q
\]
where $h^{p,q}(Y) = \dim_{\C} H^q(Y,\Omega_Y^p)$ are the Hodge numbers of $Y$. 
For instance, one has
\be\label{eqn:E_Polynomials}
\begin{split}
    \mathsf E(\P^{r-1};u,v) &= \sum_{i=0}^{r-1}u^iv^i, \\
    \mathsf E(C;u,v) &= 1-gu-gv+uv,
\end{split}
\ee
where $C$ is a smooth projective curve of genus $g$.
Sending $[Y] \mapsto \mathsf E(Y;u,v)$ defines a motivic measure
\[
K_0(\Var_{\C}) \xrightarrow{\mathsf E} \Z[u,v]
\]
which is in fact a homomorphism of rings with power structure. The power structure on the polynomial ring $\Z[u,v]$ is determined by the formula
\[
(1-t)^{-f(u,v)} = \prod_{i,j} \,\bigl(1-u^iv^jt\bigr)^{-p_{ij}},
\]
where we have written $f(u,v) = \sum_{i,j}p_{ij}u^jv^j$ for integers $p_{ij}$.
This implies (cf.~\cite[Prop.~4]{GLMHilb}) the basic relation
\be\label{hom_power_structures}
\mathsf E\left((1-t)^{-[Y]}\right) = (1-t)^{-\mathsf E(Y;u,v)}.
\ee
Let $C$ be a smooth projective curve of genus $g$, and let $E$ be a rank $r$ locally free sheaf on $C$. We compute the generating function
\[
\mathsf E_r(C,t) = \sum_{n\geq 0} \mathsf E(\Quot_C(E,n);u,v) t^n.
\]
We already know this series does not depend on $E$.

\begin{prop}\label{prop:Hodge}
There is an identity
\be\label{eqn:Hodge_Deligne}
\mathsf E_r(C,t) = 
\prod_{i=0}^{r-1}\frac{(1-u^iv^{i+1}t)^g(1-u^{i+1}v^{i}t)^g}{(1-u^iv^it)(1-u^{i+1}v^{i+1}t)}
\ee
in the ring $\Z[u,v]\llbracket t\rrbracket$.
\end{prop}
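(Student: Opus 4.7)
The plan is to apply the E-polynomial motivic measure $\mathsf E \colon K_0(\Var_{\C}) \to \Z[u,v]$ to the identity $\mathsf Z_E(t) = \Exp([C \times \P^{r-1}]\,t)$ of Theorem \ref{prop_BFP}, using the compatibility \eqref{hom_power_structures} between $\mathsf E$ and the power structures on either side.

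First I would rewrite Theorem \ref{prop_BFP} in the power-structure form
\[
\mathsf Z_E(t) = (1-t)^{-[C \times \P^{r-1}]},
\]
using the identity $\Exp([Y]t) = \zeta_Y(t) = (1-t)^{-[Y]}$ recorded in the remark preceding that theorem. Applying the ring homomorphism $\mathsf E$ and invoking \eqref{hom_power_structures} yields
\[
\mathsf E_r(C,t) = (1-t)^{-\mathsf E(C \times \P^{r-1};\,u,v)} \,\in\, \Z[u,v]\llbracket t\rrbracket.
\]

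Next I would compute the exponent using multiplicativity of $\mathsf E$ and the formulas \eqref{eqn:E_Polynomials}:
\[
\mathsf E(C \times \P^{r-1};u,v) = (1 - gu - gv + uv)\sum_{i=0}^{r-1} u^i v^i = \sum_{i=0}^{r-1}\bigl(u^i v^i + u^{i+1}v^{i+1} - g\,u^{i+1}v^i - g\,u^iv^{i+1}\bigr).
\]
Finally I would read off the integer coefficients $p_{ij}$ and plug them into the explicit formula
$(1-t)^{-f(u,v)} = \prod_{i,j}(1-u^iv^jt)^{-p_{ij}}$ that defines the power structure on $\Z[u,v]$. The monomials $u^i v^i$ and $u^{i+1}v^{i+1}$ contribute factors $(1-u^iv^it)^{-1}$ and $(1-u^{i+1}v^{i+1}t)^{-1}$ in the denominator, while the coefficients $-g$ of $u^{i+1}v^i$ and $u^iv^{i+1}$ contribute factors $(1-u^{i+1}v^it)^g$ and $(1-u^iv^{i+1}t)^g$ in the numerator, reproducing exactly \eqref{eqn:Hodge_Deligne}.

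There is no substantive obstacle here: once the compatibility \eqref{hom_power_structures} is in place, the argument is a short bookkeeping calculation. The only thing to double-check is that the multiplicativity of the Hodge--Deligne E-polynomial on products of smooth projective varieties may be applied to $C \times \P^{r-1}$, which is immediate from the K\"unneth formula.
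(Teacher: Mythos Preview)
Your proposal is correct and follows essentially the same argument as the paper: apply $\mathsf E$ to the identity $\mathsf Z_E(t)=(1-t)^{-[C\times\P^{r-1}]}$, use the compatibility \eqref{hom_power_structures}, compute $\mathsf E(C\times\P^{r-1};u,v)$ via multiplicativity and \eqref{eqn:E_Polynomials}, and read off the product from the definition of the power structure on $\Z[u,v]$. You spell out the final bookkeeping step slightly more explicitly than the paper does, but the route is identical.
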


\begin{proof}
We have
\begin{align*}
\mathsf E_r(C,t) &\,=\, \mathsf E\bigl(\Exp\bigl(\bigl[C\times \P^{r-1}\bigr]t\bigr)\bigr) & \textrm{by \eqref{eqn:Absolute_Motive_Exp}} \\
&\,=\,\mathsf E\left((1-t)^{-[C\times \P^{r-1}]}\right) & \textrm{by definition of Exp}\\
&\,=\,(1-t)^{-\mathsf E([C\times \P^{r-1}])} & \textrm{by \eqref{hom_power_structures}} \\
&\,=\,(1-t)^{-(1-gu-gv+uv)\sum_{i=0}^{r-1}u^iv^i}.
\end{align*}
We have used that $\mathsf E$ is a ring homomorphism and the identities \eqref{eqn:E_Polynomials} in the last step. The result now follows from direct computation and by definition of the power structure on $\Z[u,v]$.
\end{proof}

\begin{remark}
Setting $u=v$ in Formula \eqref{eqn:Hodge_Deligne} one recovers the generating function of (signed) Poincar\'e polynomials computed in \cite[Remark 4.6]{BFP19}, namely
\[
\sum_{n\geq 0} P(\Quot_C(F,n),-u)t^n = 
\prod_{i=0}^{r-1}\frac{(1-u^{2i+1}t)^{2g}}{(1-u^{2i}t)(1-u^{2i+2}t)}.
\]
\end{remark}

\subsection{The Hilbert scheme of points on a surface}\label{sec:sufaces}

Let $S$ be a smooth quasi-projective surface, and set $r=1$, so that $\Quot_S(L,n) = \Hilb^nS$ for every line bundle $L$.
We know by Formula \eqref{eqn:Relative_Motive_Exp} that 
\[
\mathsf Z_{\O_S}(t) = \Exp\left([S]\sum_{n>0} \Omega_{1,n}t^n \right).
\]
On the other hand, by G\"{o}ttsche's formula \cite{LG1},
\[
\mathsf Z_{\O_S}(t) = \Exp\left(\frac{[S]t}{1-\L t}\right) = \Exp\left([S]\sum_{n > 0}\L^{n-1} t^{n}\right).
\]
By the injectivity of $\Exp$, we conclude that on a surface $S$ one has
\[
(\Omega_{1,n} - \L^{n-1})[S] = 0.
\]
However, this relation holds universally for \emph{every} quasi-projective surface, in particular for $S=\P^2$ and $S=\A^1\times \P^1$. Therefore
\[
(\Omega_{1,n} - \L^{n-1})(1+\L+\L^2) = 0 = (\Omega_{1,n} - \L^{n-1})(\L+\L^2),
\]
showing that 
\be\label{Omega_Surface}
\Omega_{1,n} = \L^{n-1},\quad n>0.
\ee
In particular, we recover the known generating function of the motives of punctual Hilbert schemes, given by the formula
\be\label{Punctual_on_surfaces}
\mathsf P_1(t) = \sum_{n\geq 0}\,\bigl[\Hilb^n(\A^2)_0\bigr] t^n = \prod_{n\geq 1}\,\bigl(1-\L^{n-1}t^n\bigr)^{-1}.
\ee

Finally, we obtain the following relative statement.

\begin{theorem}
There is an identity
\[
\sum_{n\geq 0}\,\left[\Hilb^n\A^2 \xrightarrow{\sigma_n}\Sym^n\A^2 \right] = \Exp_\cup\left(\sum_{n>0}\L^{n-1}\boxtimes \bigl[\A^2\xrightarrow{\Delta_n}\Sym^n\A^2\bigr]\right)
\]
in $K_0(\Var_{\Sym \A^2})$.
\end{theorem}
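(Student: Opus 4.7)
The statement is essentially a direct corollary of the two main technical results that have already been established, so the plan is short: I will simply assemble the ingredients.

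First, I will unwind the left-hand side. Since $r=1$, the Quot scheme $\Quot_{\A^2}(\O,n)$ is by definition the Hilbert scheme $\Hilb^n\A^2$, and the Quot-to-Chow morphism $\sigma_n$ is the Hilbert--Chow morphism. Therefore the left-hand side is precisely the relative generating function $\mathsf Z^{\rel}(\A^2,1) \in K_0(\Var_{\Sym\A^2})$ as defined earlier.

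Next, I will apply Theorem \ref{thm:Relative_Exp} with $d=2$ and $r=1$, which yields
\[
\mathsf Z^{\rel}(\A^2,1) = \Exp_\cup\left(\sum_{n>0}\Omega_{1,n}\boxtimes\bigl[\A^2\xrightarrow{\Delta_n}\Sym^n\A^2\bigr]\right).
\]
Finally, I will substitute the identity $\Omega_{1,n} = \L^{n-1}$ established in \eqref{Omega_Surface} (which was obtained by comparing \eqref{eqn:Relative_Motive_Exp} for $X=\P^2$ and $X=\A^1\times \P^1$ with G\"{o}ttsche's formula and using the injectivity of $\Exp$ plus the cancellability of $\L$-polynomial classes in $K_0(\Var_{\C})$).

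There is no genuine obstacle: the hard work has been done upstream, both in Theorem \ref{thm:Relative_Exp} (the abstract relative formula in terms of the punctual classes $\Omega_{r,n}$) and in the computation \eqref{Omega_Surface} of $\Omega_{1,n}$ via G\"{o}ttsche. What might require a line of explanation is verifying that the substitution is legitimate term-by-term in the $\Exp_\cup$, but this is immediate from the definition of $\Exp_\cup$ as a sum over $\sigma_\nu^n$ applied to each graded piece, and the fact that the operation $A\mapsto A\boxtimes[\A^2\xrightarrow{\Delta_n}\Sym^n\A^2]$ is linear in $A \in K_0(\Var_{\C})$. The proof is therefore essentially one sentence: \emph{apply Theorem \ref{thm:Relative_Exp} and insert \eqref{Omega_Surface}}.
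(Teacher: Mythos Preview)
Your proposal is correct and matches the paper's own proof exactly: the paper's argument is the single sentence ``Combine Theorem \ref{thm:Relative_Exp} with Equation \eqref{Omega_Surface}.'' Your additional remarks (unwinding the left-hand side as $\mathsf Z^{\rel}(\A^2,1)$ and noting the linearity in the $\Exp_\cup$ input) are harmless elaborations of the same one-line deduction.
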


\begin{proof}
Combine Theorem \ref{thm:Relative_Exp} with Equation \eqref{Omega_Surface}.
\end{proof}

\begin{remark}
The relation \eqref{Punctual_on_surfaces} was already proved in \cite{GLMps}, and it was exploited in \cite{MR18} to provide a motivic check of the classification of modules of length $3$ and $4$ over the polynomial ring $k[x,y]$.
\end{remark}

\section{A motivic-to-geometric open problem}
Let $C$ be a smooth quasi-projective curve. The punctual Quot scheme
\[
P_{r,n} \subset \Quot_C(\O^{\oplus r},n)
\]
parameterises quotients $\O^{\oplus r} \onto Q$ entirely supported at a single (fixed) point $p\in C$. As proved in \cite[Prop.~2.6]{BFP19}, one has
\[
P_{r,1} = \P^{r-1}.
\]
How can one describe $P_{r,n}$ for $n>1$? The relation
\[
\mathsf P_r(t) = \Exp\left(\bigl[\P^{r-1}\bigr] t \right) = \zeta_{\P^{r-1}}(t)
\]
established in Equation \eqref{Punctual_On_Curves} translates into the motivic identity
\be\label{Motivic_Punctual_Sym}
\bigl[P_{r,n}\bigr] = \bigl[\Sym^n\P^{r-1} \bigr] = \bigl[\Sym^n P_{r,1} \bigr].
\ee
It thus makes sense to ask the following:
\begin{question}
What is the geometric meaning of the relation \eqref{Motivic_Punctual_Sym}? Can one geometrically compare the schemes $P_{r,n}$ and $\Sym^n P_{r,1}$?
\end{question}

\bibliographystyle{amsplain-nodash}
\bibliography{bib}

\end{document}